\numberwithin{equation}{section}
\newtheorem{thm}{Theorem}[section]
\newtheorem{prop}[thm]{Proposition}
\newtheorem{cor}[thm]{Corollary}
\theoremstyle{definition}
\newtheorem{dfn}[thm]{Definition}
\theoremstyle{remark}
\newtheorem{rem}[thm]{Remark}
\newtheorem{ex}[thm]{Example}
\DeclareMathOperator{\Imaginary}{Im}
\DeclareMathOperator{\aut}{Aut}
\DeclareMathOperator{\Id}{id}
\DeclareMathOperator{\vol}{vol}
\DeclareMathOperator{\sgn}{sgn}
\DeclareMathOperator{\spn}{Span}
\def\R{\mathbb{R}}
\def\C{\mathbb{C}}
\def\Z{\mathbb{Z}}
\def\H{\mathbb{H}}
\def\N{\mathbb{N}}
\def\dt{\frac{\partial}{\partial t}}
\def\map{\longrightarrow}
\newcommand{\kak}[1]{\left(#1\right)}
\newcommand{\conj}[1]{\overline{#1}}
\newcommand{\FS}[1]{\mathscr{L}_#1}
\newcommand{\abs}[1]{\lvert #1 \rvert}
\title{The Folland--Stein spectrum of some Heisenberg Bieberbach manifolds}
\author{Yoshiaki Suzuki}
\address{Department of Mathematical Science, Faculty of Fundamental Sciences, Graduate School of Science and Technology, Niigata University, Niigata 950-2181, Japan}
\email{f21j010a@mail.cc.niigata-u.ac.jp}
\subjclass[2020]{Primary 32V25; Secondary 35P20, 58C40.}
\keywords{Kohn Laplacian; Folland--Stein operator; Almost-Bieberbach groups; infra-nilmanifolds}
\begin{document}

\maketitle

\begin{abstract}
We study the eigenvalues and eigenfunctions of the Folland--Stein operator $\mathscr{L}_\alpha$ on some examples of 3-dimensional Heisenberg Bieberbach manifolds, that is, compact quotients $\Gamma\backslash\H$ of the Heisenberg group $\H$ by  a discrete torsion-free subgroup $\Gamma$ of $\H\rtimes U(1)$.
\end{abstract}

\section{Introduction}
The aim of this paper is to study the eigenvalues and eigenfunctions of the Folland--Stein operator $\mathscr{L}_\alpha$ on some examples of what we call Heisenberg Bieberbach manifolds. Let $\H$ be the 3-dimensional Heisenberg group with the standard left-invariant CR structure.
As is well known, the Kohn Laplacian acting on differential forms on $\H$ is, on the level of coefficient functions, given by the Folland--Stein operators $\mathscr{L}_\alpha=\mathscr{L}_0+i\alpha T$, where $\mathscr{L}_0$ is the sub-Laplacian, $T$ is the Reeb vector field and $\alpha\in\R$.
A \emph{Heisenberg Bieberbach manifold} is a compact quotient $\Gamma\backslash\H$ by a discrete torsion-free subgroup $\Gamma$ of $\H\rtimes U(1)$, called a \emph{Heisenberg Bieberbach group}, where $U(1)$ is identified with unitary automorphisms of $\H$.
Since unitary automorphisms preserve the CR structure on $\H$ and the Folland--Stein operator is invariant under unitary automorphisms, any Heisenberg Bieberbach manifold inherits a natural CR structure and the Folland--Stein operator $\mathscr{L}_\alpha$ descends to it.

In the case where $\Gamma=N$ is a lattice subgroup of $\H$, Folland \cite{folland} studied the operator $\mathscr{L}_\alpha$ on $L^2(N\backslash\H)$.
His idea was using the Weil--Brezin transform to explicitly decompose the right translation representation of $\H$ on $L^2(N\backslash\H)$ into irreducible representations.
He showed that the image of an eigenfunction of the Hamiltonian of the harmonic oscillator on $\R$ by the Weil--Brezin transform is an eigenfunction of the sub-Laplacian, thereby completely determining the eigenvalues and eigenfunctions of $\mathscr{L}_\alpha$ on $L^2(N\backslash\H)$.
We are interested to see how well Folland's method can be extended to Heisenberg Bieberbach manifolds.

In this paper, we treat the two examples $\Gamma_{2l,\pi}$ and $\Gamma'_{2l,\frac{\pi}{2}}$ of Heisenberg Bieberbach groups which are not lattices. The group $\Gamma_{2l,\pi}$ contains a certain lattice $N_{2l}$ as its normal subgroup of $\Gamma_{2l,\pi}$ of index 2 and is generated by $N_{2l}$ and a certain element $\varphi \in \H \rtimes U(1)$.
We want to find $\varphi$-invariant eigenfunctions on $N_{2l}\backslash\H$ because such functions can be regarded as eigenfunctions on $\Gamma_{2l,\pi}\backslash\H$. Folland's result gives us the explicit forms of the eigenfunctions on $N_{2l}\backslash\H$, in terms of which
we will describe the eigenspaces on $\Gamma_{2l,\pi}\backslash\H$ and calculate their dimensions.
In this way, roughly speaking, it will be shown that the dimension of each eigenspace on $\Gamma_{2l,\pi}\backslash\H$ is about half of the dimension of the corresponding eigenspaces on $N_{2l}\backslash\H$ (see Theorem \ref{cor:dim}).
In particular, an analogue of Weyl's law holds for $\Gamma_{2l,\pi}\backslash\H$.
The same argument can be applied to the other example $\Gamma'_{2l,\frac{\pi}{2}}$ to show an analogues result in Section \ref{sectionGammaprime}.
The computation is substantially more complicated than in the previous example.

We remark that Heisenberg Bieberbach groups are analogues of Bieberbach groups in the Euclidean case, that is,  discrete torsion-free subgroups $\Pi\subset\R^3\rtimes O(3)$ with compact quotient $\Pi\backslash\R^3$.
The generalized notion of \emph{almost-Bieberbach groups} including these Euclidean and Heisenberg cases has been studied by some authors including Auslander \cite{auslander} and Dekimpe \cite{dekimpe}, and corresponding quotient manifolds, \emph{infra-nilmanifolds}, have attracted interest mainly from the viewpoints of algebraic and differential topology (see for instance \cite{dekimpe} and \cite{dekimpe2}).
However, analytic approaches toward infra-nilmanifolds have been scarce, which is why we find our problem particularly interesting.

\section{Heisenberg manifolds}

\subsection{The Heisenberg group and lattice subgroups}
We review some of the standard facts on the Heisenberg group and its lattice subgroups.
The \emph{Heisenberg group} is the Lie group $\H=\C\times\R$ with
the group multiplication given by
\begin{equation}
(z,t)\cdot(z',t')=(z+z',t+t'+2\Imaginary z\bar{z}')
\end{equation}
for $(z,t),(z',t')\in\H$.
One can easily see that the vector fields
\begin{align*}
X=\frac{\partial}{\partial x}+2y\dt,\quad
Y=\frac{\partial}{\partial y}-2x\dt,\quad
T=\dt
\end{align*}
form a basis of the space of left-invariant vector fields, where $z=x+iy$.
Let us consider the complex vector field on $\H$ given by
\begin{equation}
Z=\frac{1}{2}(X-iY)=\frac{\partial}{\partial z}+i\bar{z}T.
\end{equation}
The subbundle $T^{1,0}\H$ of $\C\otimes T\H$ spanned by $Z$ is a so-called strictly pseudoconvex CR structure on $\H$. Let $\wedge^{0,q}\H$ denote the space of sections of $\bigwedge^q(T^{0,1}\H)^*$ for $q=0,1$, where $T^{0,1}\H=\conj{T^{1,0}\H}$.
The tangential $\bar{\partial}$ operator
$\bar{\partial}_b\colon C^{\infty}(\H)\rightarrow \wedge^{0,1}\H$ is given by
\begin{equation}
\bar{\partial}_bf=(\bar{Z}f)\,d\bar{z}.
\end{equation}
Moreover, the Kohn Laplacian is defined by
\begin{equation}
\Box_b=\bar{\partial}_b\bar{\partial}_b^*+\bar{\partial}_b^*\bar{\partial}_b\colon\wedge^{0,q}\H\rightarrow\wedge^{0,q}\H,\quad q=0,1.
\end{equation}
This boils down to $\Box_bf=\mathscr{L}_{1}f$ and  $\Box_b(f\,d\bar{z})=(\mathscr{L}_{-1}f)\,d\bar{z}$, where
we write
\begin{equation}
\mathscr{L}_\alpha=-\frac{1}{2}(Z\bar{Z}+\bar{Z}Z)+i\alpha T
\end{equation}
for any real number $\alpha$. We call $\mathscr{L}_\alpha$ the \emph{Folland--Stein operator}. For more details on the Heisenberg group, we refer the reader to \cite{follandstein}.

\begin{dfn}
A subgroup $N\subset\H$ is a \emph{lattice subgroup} if it is a discrete subgroup for which the quotient manifold
$M=N\backslash\H$ is compact.
A \emph{Heisenberg manifold} is the quotient of the Heisenberg group by a lattice subgroup.
\end{dfn}
Since any left-invariant vector field descends to $M$,
a Heisenberg manifold $M$ has the natural CR structure and the Folland--Stein operator $\mathscr{L}_\alpha$ can be considered as a differential operator on $M$.

To explain the classification of lattices, it is convenient to use a different group multiplication on $\H$. We define
\begin{equation}
(p,q,s)\star(p',q',s')=(p+p',q+q',s+s'+pq')
\end{equation}
for $(p,q,s),(p',q',s')\in\H$.
Then the group $(\H,\star)$ is called the \emph{polarized Heisenberg group} and the coordinate system $(p,q,s)$ is called the \emph{polarized coordinates}. The polarized Heisenberg group $(\H,\star)$ is isomorphic to the Heisenberg group $(\H,\cdot)$ that we defined in the beginning. Indeed, the map
\begin{equation}
(\H,\cdot)\ni(x,y,t)\mapsto\kak{y,x,\frac{t}{4}+\frac{xy}{2}}\in(\H,\star)
\end{equation}
is an isomorphism. In the following, we basically use the polarized Heisenberg group. With respect to the polarized coordinates, it is easy to check that the Folland--Stein operator $\mathscr{L}_\alpha$ is written as
\begin{equation}
\mathscr{L}_\alpha=\frac{1}{4}(-(P^2+Q^2)+i\alpha S), \label{fs}
\end{equation}
where
\begin{equation}
P=\frac{\partial}{\partial p},\quad Q=\frac{\partial}{\partial q}+p\frac{\partial}{\partial s},\quad S=\frac{\partial}{\partial s}.
\end{equation}

We now introduce a specific class of lattices of $\H$.
For $l\in\Z_{>0}$, the discrete subgroup
\begin{equation}
N_l=\Z\times l\Z\times\Z
\end{equation}
is a lattice. Indeed, the rectangle $Q_l=[0,1)\times[0,l)\times[0,1)$ is a fundamental domain for $N_l$. Since $Q_l$ has compact closure, the quotient $N_l\backslash\H$ is also compact.
The importance of the lattice $N_l$ lies in the following fact, whose proof can be found in \cite{tolimieri}.
\begin{prop}\label{classlattice}
For any lattice $N\subset\H$, there exist a unique $l\in\Z_{>0}$ and an automorphism $\Phi\in\aut_0(\H)$ such that $\Phi(N)=N_l$.
\end{prop}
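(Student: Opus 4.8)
The plan is to read off a complete invariant of $N$ from its interaction with the center $\mathcal{Z}=\{(0,0,s):s\in\R\}$ of $(\H,\star)$, and then to build an explicit automorphism normalizing $N$ to $N_l$. First I would invoke the structure theory of lattices in nilpotent Lie groups (Malcev) to see that $N\cap\mathcal{Z}$ is a lattice in $\mathcal{Z}\cong\R$, hence $N\cap\mathcal{Z}=c\Z$ for some $c>0$, and that the image $\overline{N}$ of $N$ under the projection $\H\to\H/\mathcal{Z}\cong\R^2$ is a rank-two lattice. A direct computation gives $[(p,q,s),(p',q',s')]=(0,0,pq'-p'q)$, so the commutator descends to the standard alternating form $\omega$ on $\R^2$ and $[N,N]$ sits inside $N\cap\mathcal{Z}$. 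Choosing a basis $e_1,e_2$ of $\overline{N}$ with $\omega(e_1,e_2)>0$, one then has $\omega(e_1,e_2)=lc$ for a positive integer $l$.

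For uniqueness I would use the intrinsic integer $l(N):=[\,N\cap\mathcal{Z}:[N,N]\,]$. Since any automorphism of $\H$ preserves both the center and the commutator subgroup, it carries the pair $[N,N]\subseteq N\cap\mathcal{Z}$ isomorphically onto $[\Phi(N),\Phi(N)]\subseteq \Phi(N)\cap\mathcal{Z}$, so $l(\Phi(N))=l(N)$. A one-line computation shows $l(N_l)=[\Z:l\Z]=l$; hence if $\Phi(N)=N_l$ then $l=l(N)$ is forced, which settles uniqueness (and identifies the $l$ above, since $[N\cap\mathcal{Z}:[N,N]]=[c\Z:lc\Z]=l$).

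For existence I would produce $\Phi$ in two steps. The automorphisms of $(\H,\star)$ coming from $A=\begin{pmatrix}a&b\\c&d\end{pmatrix}\in GL(2,\R)$ have the form $\phi_A(p,q,s)=(ap+bq,\,cp+dq,\,(\det A)\,s+B_A(p,q))$ for a quadratic polynomial $B_A$ forced by the homomorphism condition; they scale the center by $\det A$ and induce $A$ on $\H/\mathcal{Z}$. I would take the unique $A$ with $Ae_1=(1,0)$ and $Ae_2=(0,l)$; since $\det[e_1\,|\,e_2]=\omega(e_1,e_2)=lc$, this forces $\det A=1/c>0$, so $\phi_A\in\aut_0(\H)$, $\phi_A(\overline{N})=\Z\times l\Z$ and $\phi_A(N\cap\mathcal{Z})=\Z$. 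After this step the lattice $\phi_A(N)$ is generated by $(0,0,1)$ together with lifts $(1,0,s_1)$ and $(0,l,s_2)$ of the standard basis, where $s_1,s_2$ are well defined mod $1$. These offsets I would remove by an inner automorphism: conjugation by $(p_0,q_0,s_0)$ acts by $(p,q,s)\mapsto(p,q,\,s+p_0q-q_0p)$, and because $\omega$ is nondegenerate the induced shift $(s_1,s_2)\mapsto(s_1-q_0,\,s_2+lp_0)$ can be prescribed arbitrarily, in particular set to $(0,0)$. The resulting group is generated by $(1,0,0),(0,l,0),(0,0,1)$, which is exactly $N_l=\Z\times l\Z\times\Z$, and $\Phi\in\aut_0(\H)$.

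The main obstacle is bookkeeping the central coordinate rather than anything deep: one must check that fixing $\overline{N}$ and $N\cap\mathcal{Z}$ leaves precisely the two-parameter ambiguity in $(s_1,s_2)$, and that this is exactly the freedom absorbed by inner automorphisms, so that the normal form is reached on the nose and equals $N_l$ (and not some twisted variant). A secondary point to verify carefully is that all automorphisms used ($\phi_A$ with $\det A>0$ and inner automorphisms) lie in the identity component $\aut_0(\H)$; this is where the positivity $\det A=1/c>0$, guaranteed by the orientation choice $\omega(e_1,e_2)>0$, is essential.
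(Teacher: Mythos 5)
Your proof is correct; note, though, that there is no internal proof to compare it against — the paper states Proposition \ref{classlattice} without proof, citing Tolimieri \cite{tolimieri}. Your argument is the classical direct one, and its steps check out: the commutator formula $[(p,q,s),(p',q',s')]=(0,0,pq'-p'q)$ is right for the polarized product, so $\omega$ takes values in $c\Z$ on $\overline{N}$ and the integer $l$ is well defined; the index $[\,N\cap\mathcal{Z}:[N,N]\,]$ is an automorphism invariant equal to $l$ for $N_l$, which settles uniqueness; and the two-step normalization (linear automorphism $\phi_A$, then an inner automorphism absorbing the central offsets via $(s_1,s_2)\mapsto(s_1-q_0,s_2+lp_0)$) lands exactly on $N_l$, since $(1,0,0)$, $(0,l,0)$, $(0,0,1)$ generate $\Z\times l\Z\times\Z$ under $\star$. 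The two points you flagged do need to be written out, and both go through. For the existence of $\phi_A$: with $\beta(v,v')=pq'$, the homomorphism condition is $B_A(v+v')-B_A(v)-B_A(v')=\beta(Av,Av')-\det A\,\beta(v,v')$, whose right-hand side is symmetric bilinear precisely because the antisymmetric part of $\beta$ scales by $\det A$; hence $B_A(v)=\frac{1}{2}\bigl(\beta(Av,Av)-\det A\,\beta(v,v)\bigr)$ works. For $\phi_A\in\aut_0(\H)$: rather than invoking an unproved characterization of the identity component, factor $A=A_1\cdot(rI)$ with $\det A_1=1$ and $r=c^{-1/2}$; then $\phi_A$ differs from the composition of the symplectic map for $A_1$ with the dilation by $r$ only by an automorphism inducing the identity on $\H/\mathcal{Z}$ and on $\mathcal{Z}$, and any such automorphism is a shear $s\mapsto s+aq-bp$, i.e.\ inner. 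So $\phi_A$ is a product of a symplectic map, a dilation and an inner automorphism, which places it in $\aut_0(\H)$ exactly via the decomposition the paper quotes from \cite{follandharm}. What your route buys over the paper's citation is a self-contained proof together with the intrinsic identification $l=[\,N\cap\mathcal{Z}:[N,N]\,]$, which also makes the uniqueness assertion transparent.
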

Here, $\aut_0(\H)$ denotes the identity component of the group $\aut(\H)$ of Lie group automorphisms of $\H$. For any $\Phi\in\aut_0(\H)$, it is known that there exist a symplectic map $\Phi_1$, an inner automorphism $\Phi_2$ and a dilation $\Phi_3$ such that $\Phi=\Phi_1\Phi_2\Phi_3$ (see \cite{follandharm} for a proof), where
a symplectic map is
\begin{equation}
(p,q,s)\mapsto\kak{Cq+Dp, Aq+Bp, s+\frac{1}{2}((Aq+Bp)(Cq+Dp)-pq)}
\end{equation}
associated with a symplectic matrix
$
\kak{
\begin{array}{cc}
A & B \\
C & D \\
\end{array}
}
$
, an inner automorphism is
\begin{equation}
(p,q,s)\mapsto(a,b,c)(p,q,s)(a,b,c)^{-1}=(p,q,s+aq-bp)
\end{equation}
for $(a,b,c)\in\H$, and a dilation is $(p,q,s)\mapsto(rp,rq,r^2s)$ for $r>0$.

\subsection{Harmonic analysis on Heisenberg manifolds}
In this section, we review the method of harmonic analysis on Heisenberg manifolds following \cite{folland}.

Let $M$ be a Heisenberg manifold $N\backslash\H$ where $N$ is a lattice of $\H$. Note that the polarized coordinates equip $\H$ with the left-invariant Lebesgue measure, which therefore induces a measure on $M$.
We may identify the space of square integrable functions $L^2(M)$ with the space of $N$-invariant functions $f$ on $\H$ with $\int_Q|f|^2\,dp\,dq\,ds<\infty$, where $Q$ is a fundamental domain for $N$.
The right action of $\H$ on $M$ induces the unitary representation $R$ of $\H$ on $L^2(M)$ defined by
\begin{equation}
(R(h)f)([g])=f([gh]), \quad g,h\in\H.
\end{equation}
We want to study the decomposition of $R$ in terms of irreducible representations.

First, we expand a function $f(p,q,s)\in L^2(M)$ into the Fourier series with respect to the action of the center $Z(\H)=\set{(0,0,s)\mid s\in\R}$ of $\H$. Note that $Z_N=Z(\H)\cap N$ is a discrete subgroup of $Z(\H)$, and so we have $Z_N=\set{(0,0,cs)\mid s\in\Z}$ for some $c>0$.
Since $f$ is $N$- invariant, it follows that $f(p,q,s+c)=f(p,q,s)$, and hence $f$ can be expanded into the Fourier series in $s$. We thus get
\begin{equation}
L^2(M)=\bigoplus_{n\in\Z}\mathscr{H}_n, \label{LM=Hn}
\end{equation}
where
$\mathscr{H}_n$ is the subspace of functions $f\in L^2(M)$ of the form $f(p,q,s)=e^{\frac{2\pi in}{c}s}g(p,q)$, where $g\in L^2(\R^2/\Lambda_N)$, $\Lambda_N$ being the image of the projection $N\ni(p,q,s)\mapsto(p,q)\in\R^2$.

A function in $\mathscr{H}_0$ can be regarded as a function on the torus $\R^{2}/\Lambda_N$. Hence the irreducible decomposition of $\mathscr{H}_0$ is given by
\begin{equation}
\mathscr{H}_0=\bigoplus_{(\mu,\nu)\in\Lambda'_N}\C\chi_{\mu,\nu}, \label{H0=Cxi}
\end{equation}
where $\chi_{\mu,\nu}(p,q)=e^{2\pi i(\mu p+\nu q)}$ and $\Lambda'_N=\set{(\mu,\nu)\in\R^2 |
\mu u+\nu v\in\Z \ \text{for all}\ (u,v)\in\Lambda_N }$.

For $n\neq 0$, we see at once that $R|_{\mathscr{H}_n}$ is unitarily equivalent to a sum of copies of the \emph{Schr\"{o}dinger representation} $\pi_{\frac{n}{c}}$ of $\H$ on $L^2(\R)$ by the Stone--von Neumann theorem (see \cite{follandharm}). Here, the Schr\"{o}dinger representation $\pi_{\beta}$ for $\beta\in\R\setminus\{0\}$ is defined by
\begin{equation}
(\pi_\beta(p,q,s)g)(x)=e^{2\pi i\beta(s+qx)}g(x+p)
\end{equation}
for $g\in L^2(\R)$.
Moreover, if $N=N_l$ for some $l\in\Z_{>0}$, the Weil--Brezin transform defined immediately below precisely describes the equivalence between $R|_{\mathscr{H}_n}$ and a sum of copies of $\pi_{n}$.
Note that $Z_{N_l}=\set{(0,0,s)\mid s\in\Z}$, and so $\mathscr{H}_n=\set{f\in L^2(M)\mid f(p,q,s)=e^{2\pi ins}g(p,q)}.$
For $a\in\Z/\abs{n}\Z$ and $b\in\Z/l\Z$, we define the \emph{Weil--Brezin transform} $W_n^{a,b}\colon L^2(\R)\map\mathscr{H}_n$ by
\begin{equation}
(W_n^{a,b}g)(p,q,s)=e^{2\pi ins}\sum_{k\in\Z}g\kak{p+k+\frac{j}{\abs{n}}+\frac{u}{ln}}e^{2\pi in\kak{k+\frac{j}{\abs{n}}+\frac{u}{ln}}q}, \label{WB}
\end{equation}
where $0\leq j\leq|n|-1$ with $a\equiv j\bmod\abs{n}$ and $0\leq u\leq l-1$ with $b\equiv u\bmod l$. We write $\mathscr{H}_n^{a,b}=\Imaginary W^{a,b}_n$. Using the Weil--Brezin transform, we can decompose $\mathscr{H}_n$ explicitly by the following proposition. A proof can be found in \cite{folland}.
\begin{prop}[Brezin \cite{brezin}]\label{weilbrezin}
The Weil--Brezin transform $W_n^{a,b}\colon L^{2}(\R)\map\mathscr{H}^{a,b}_n$ is a unitary transform which satisfies that  $R(h) \circ W_n^{a,b}=W_n^{a,b} \circ \pi_n(h)$ for $h\in\H$, and we have
\begin{equation}
\mathscr{H}_n=\bigoplus_{n\neq0,a\in\Z/\abs{n}\Z,b\in\Z/l\Z}\mathscr{H}_n^{a,b}. \label{H_n=H_n^ab}
\end{equation}
In particular, $R|_{\mathscr{H}_n^{a,b}}\simeq\pi_n$, and the multiplicity of $\pi_n$ in $R$ is $l\abs{n}$.
\end{prop}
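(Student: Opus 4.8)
The plan is to establish, in order, the intertwining relation, the isometry property, the mutual orthogonality of the images $\mathscr{H}_n^{a,b}$, and the completeness giving the direct sum; the equivalence $R|_{\mathscr{H}_n^{a,b}}\simeq\pi_n$ and the multiplicity then drop out formally. I would first dispatch the intertwining relation $R(h)\circ W_n^{a,b}=W_n^{a,b}\circ\pi_n(h)$ by direct substitution: writing $h=(p',q',s')$ and abbreviating $c_k=k+\frac{j}{\abs n}+\frac{u}{ln}$, both $\kak{W_n^{a,b}g}\kak{(p,q,s)\star h}$ and $\kak{W_n^{a,b}\pi_n(h)g}(p,q,s)$ expand to the same expression once one checks that the two cocycle factors $e^{2\pi inpq'}$ coincide. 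In the same spirit I would confirm that $W_n^{a,b}g$ really lies in $\mathscr{H}_n$, i.e.\ is left $N_l$-invariant, by testing the generators $(1,0,0)$, $(0,l,0)$, $(0,0,1)$: the $p$-shift is absorbed by re-indexing $k\mapsto k+1$, while the $q$- and $s$-shifts produce phases $e^{2\pi inc_kl}$ and $e^{2\pi in}$ that are trivial since $n,l,j,u\in\Z$.

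Both the isometry property and the orthogonality of the images would then come from a single evaluation of $\braket{W_n^{a,b}g | W_n^{a',b'}g'}$ over the fundamental domain $Q_l=[0,1)\times[0,l)\times[0,1)$. The $s$-integral contributes a factor $1$; the $q$-integral over $[0,l)$ annihilates all off-diagonal terms because each relevant frequency, multiplied by $l$, is an integer; and unfolding $\sum_k\int_0^1$ into $\int_\R$ reduces the $p$-integral to $\braket{g|g'}_{L^2(\R)}$. The result $\braket{W_n^{a,b}g|W_n^{a',b'}g'}=l\,\delta_{jj'}\delta_{uu'}\braket{g|g'}$ shows at once that each $W_n^{a,b}$ is isometric up to the overall constant fixed by the normalization of the invariant measure (so unitary onto its image once $M$ is given total mass one), and that $\mathscr{H}_n^{a,b}\perp\mathscr{H}_n^{a',b'}$ for $(a,b)\neq(a',b')$.

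The substantial step, which I expect to be the main obstacle, is completeness: showing that these orthogonal copies already exhaust $\mathscr{H}_n$ --- a mere dimension count does not suffice, since a proper closed subspace of a Hilbert space can be isomorphic to the whole. I would argue by Fourier analysis in $q$. A general $f\in\mathscr{H}_n$ is $e^{2\pi ins}g_f(p,q)$ with $g_f$ of period $l$ in $q$ and with $g_f(p+1,q)=e^{-2\pi inq}g_f(p,q)$; expanding $g_f(p,q)=\sum_m\hat g_m(p)e^{2\pi imq/l}$ turns this quasi-periodicity into the recursion $\hat g_{m+nl}(p)=\hat g_m(p+1)$, and grouping frequencies by residue $r$ modulo $\abs{nl}$ and unfolding in $p$ identifies $\mathscr{H}_n$ isometrically with $\bigoplus_{r=0}^{\abs{nl}-1}L^2(\R)$. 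Computing the $q$-Fourier coefficients of $W_n^{a,b}g$ then shows its spectrum is carried by the single class $r_{j,u}\equiv\sgn(n)lj+u\pmod{\abs{nl}}$, where it reproduces a translate of $g$; hence $\mathscr{H}_n^{a,b}$ is precisely the $r_{j,u}$-th summand. Since a base-$l$ digit argument makes $(j,u)\mapsto r_{j,u}$ a bijection of $\tkak{0,\dots,\abs n-1}\times\tkak{0,\dots,l-1}$ onto $\tkak{0,\dots,\abs{nl}-1}$, the images fill every summand, proving $\mathscr{H}_n=\bigoplus_{a,b}\mathscr{H}_n^{a,b}$. Finally $R|_{\mathscr{H}_n^{a,b}}\simeq\pi_n$ follows from the intertwining relation together with the unitarity onto the image, and the multiplicity of $\pi_n$ equals the number of pairs $(a,b)$, namely $\abs n\cdot l=l\abs n$.
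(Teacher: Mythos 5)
Your proof is correct, and since the paper itself gives no proof of this proposition (it defers to Folland \cite{folland}), the comparison is with the cited standard argument: your route --- direct substitution for the intertwining relation and $N_l$-invariance, a single inner-product computation over $Q_l$ for isometry and orthogonality, and completeness via Fourier expansion in $q$ with frequencies regrouped by residue modulo $\abs{nl}$ and unfolded in $p$ --- is essentially that standard Weil--Brezin (Zak transform) proof. You were also right to flag the normalization: with the unnormalized Lebesgue measure on $Q_l$ one gets $\langle W_n^{a,b}g, W_n^{a',b'}g'\rangle = l\,\delta_{jj'}\delta_{uu'}\langle g,g'\rangle$, so the transform as written in \eqref{WB} is unitary only after the measure on $N_l\backslash\H$ is given total mass one (equivalently, up to an overall factor $\sqrt{l}$), a point the paper's statement glosses over.
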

By \eqref{LM=Hn}, \eqref{H0=Cxi} and \eqref{H_n=H_n^ab} we have the irreducible decomposition of $L^2(N_l\backslash\H)$ given by
\begin{equation}
L^2(N_l\backslash\H)=\bigoplus_{(\mu,\nu)\in\Lambda'_{N_l}}\C\chi_{\mu,\nu}\oplus\bigoplus_{n\neq0,a\in\Z/|n|\Z,b\in\Z/l\Z}\mathscr{H}_n^{a,b}.
\end{equation}

In general, for any lattice $N$, we can also obtain the irreducible decomposition of $L^2(N\backslash\H)$ using the argument for the case of the specific class of lattices. Suppose that $Z_N=\set{(0,0,cs)\mid s\in\Z}$. By Proposition \ref{classlattice},  there exist a unique $l\in\Z_{>0}$ and an automorphism $\Phi\in\aut_0(\H)$ such that $\Phi(N)=N_l$.
 First, we have $L^2(N_l\backslash\H)=\mathscr{H}_0\oplus\bigoplus_{n,a,b}\mathscr{H}_n^{a,b}$ and $\pi_n\simeq R|_{\mathscr{H}^{a,b}_n}$ by Proposition \ref{weilbrezin}. In particular, $\pi_n\circ\Phi\simeq (R\circ\Phi)|_{\mathscr{H}^{a,b}_n}$.
Using an expression $\Phi=\Phi_1\Phi_2\Phi_3$ mentioned above,  we can easily see that the Jacobian $|d\Phi|$ of $\Phi$ is constant. Hence the map $\Phi^*\colon F\mapsto |d\Phi|^{\frac{1}{2}}(F\circ\Phi)$ is a unitary map from $\mathscr{H}^{a,b}_n$ onto its image denoted by $\widetilde{\mathscr{H}}^{a,b}_n\subset L^2(N\backslash\H)$, and so $\Phi^*$ gives the equivalence $(R\circ\Phi)|_{\mathscr{H}^{a,b}_n}\simeq R|_{\widetilde{\mathscr{H}}^{a,b}_n}$.
On the other hand, since $Z_N=\set{(0,0,cs)\mid s\in\Z}$, the dilation component $\Phi_3$ is
$(p,q,s)\mapsto(c^{-\frac{1}{2}}p,c^{-\frac{1}{2}}q,c^{-1}s)$
, and hence
$(\pi_n\circ\Phi)(0,0,s)=e^{\frac{2\pi ins}{c}}\Id.$ By the Stone--von Neumann theorem, we see that
$\pi_{\frac{n}{c}}\simeq\pi_n\circ\Phi$. Therefore $\pi_{\frac{n}{c}}\simeq \pi_n\circ\Phi\simeq R|_{\widetilde{\mathscr{H}}^{a,b}_n}$.
Summarizing, we have the following.
\begin{cor}\label{coro:LNH}
 Let $N$ be a lattice of $\H$ with $Z_N=\set{(0,0,cs)\mid s\in\Z}$. Suppose that $\Phi(N)=N_l$ for some $l\in\Z_{>0}$ and $\Phi\in\aut_0(\H)$. Then we have $\pi_{\frac{n}{c}}\simeq R|_{\widetilde{\mathscr{H}}^{a,b}_n}$ and
\begin{equation}
L^2(N\backslash\H)=\bigoplus_{(\mu,\nu)\in\Lambda'_{N}}\C\chi_{\mu,\nu}\oplus\bigoplus_{n\neq0,a\in\Z/\abs{n}\Z,b\in\Z/l\Z}\widetilde{\mathscr{H}}_n^{a,b}, \label{LNH}
\end{equation}
where $\widetilde{\mathscr{H}}^{a,b}_n=\Phi^*(\mathscr{H}^{a,b}_n)$. In particular, the multiplicity of $\pi_{\frac{n}{c}}$ in $R$ is $l\abs{n}$.
\end{cor}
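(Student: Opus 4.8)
The plan is to transport the known decomposition of $L^2(N_l\backslash\H)$ furnished by Proposition \ref{weilbrezin} over to $L^2(N\backslash\H)$ using the automorphism $\Phi$, and then to identify the resulting irreducible pieces via the Stone--von Neumann theorem. I would first dispose of the central Fourier mode $n=0$ directly, without reference to $\Phi$: a function in the $n=0$ component of $L^2(N\backslash\H)$ descends to the torus $\R^2/\Lambda_N$, so ordinary Fourier analysis on this torus produces exactly $\bigoplus_{(\mu,\nu)\in\Lambda'_N}\C\chi_{\mu,\nu}$, precisely as in \eqref{H0=Cxi}. This accounts for the first summand of \eqref{LNH}.

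For the modes $n\neq 0$, the first step is to check that the pullback $\Phi^*\colon F\mapsto |d\Phi|^{1/2}(F\circ\Phi)$ is a well-defined unitary map $L^2(N_l\backslash\H)\to L^2(N\backslash\H)$. Since $\Phi(N)=N_l$, an $N_l$-invariant $F$ satisfies $(F\circ\Phi)(ng)=F(\Phi(n)\Phi(g))=F(\Phi(g))$ for $n\in N$, so $F\circ\Phi$ is $N$-invariant; and because the factorization $\Phi=\Phi_1\Phi_2\Phi_3$ shows that $|d\Phi|$ is a positive constant, multiplication by $|d\Phi|^{1/2}$ turns the map into an isometry. A one-line computation with the right translation gives $R(h)\circ\Phi^*=\Phi^*\circ R(\Phi(h))$, so $\Phi^*$ intertwines $R\circ\Phi$ on $L^2(N_l\backslash\H)$ with $R$ on $L^2(N\backslash\H)$. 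Putting $\widetilde{\mathscr{H}}_n^{a,b}=\Phi^*(\mathscr{H}_n^{a,b})$ and combining with $R|_{\mathscr{H}_n^{a,b}}\simeq\pi_n$ from Proposition \ref{weilbrezin} then yields $R|_{\widetilde{\mathscr{H}}_n^{a,b}}\simeq (R\circ\Phi)|_{\mathscr{H}_n^{a,b}}\simeq \pi_n\circ\Phi$.

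The key step is to recognize $\pi_n\circ\Phi$ as $\pi_{n/c}$. I would compute the action of $\Phi$ on the center. A symplectic map and an inner automorphism each fix $(0,0,s)$, while the hypothesis $Z_N=\{(0,0,cs)\mid s\in\Z\}$ forces (via $\Phi(Z_N)=Z_{N_l}$) the dilation factor to be $\Phi_3(p,q,s)=(c^{-1/2}p,c^{-1/2}q,c^{-1}s)$, so that $\Phi(0,0,s)=(0,0,c^{-1}s)$. Hence $(\pi_n\circ\Phi)(0,0,s)=e^{2\pi i(n/c)s}\,\Id$, i.e. the central character of $\pi_n\circ\Phi$ coincides with that of $\pi_{n/c}$. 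Since every irreducible unitary representation of $\H$ is determined up to equivalence by its central character, the Stone--von Neumann theorem gives $\pi_n\circ\Phi\simeq\pi_{n/c}$, and therefore $\pi_{n/c}\simeq R|_{\widetilde{\mathscr{H}}_n^{a,b}}$.

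Finally, applying the unitary $\Phi^*$ to the orthogonal decomposition of $L^2(N_l\backslash\H)$ transports $\bigoplus_{n,a,b}\mathscr{H}_n^{a,b}$ onto $\bigoplus_{n,a,b}\widetilde{\mathscr{H}}_n^{a,b}$, which together with the $n=0$ part established above gives \eqref{LNH}; the multiplicity count $l|n|$ carries over unchanged because $\Phi^*$ is a unitary equivalence and the index sets for $a$ and $b$ are untouched. The main obstacle I anticipate is the central-character bookkeeping: one must pin down the dilation exponent as $c^{-1}$ (rather than $c$ or $c^{-1/2}$) and confirm that the symplectic and inner factors act trivially on the center. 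Once that is secured, the remainder is a formal transport of structure along $\Phi^*$.
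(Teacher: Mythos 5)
Your proposal is correct and follows essentially the same route as the paper: transport the decomposition of $L^2(N_l\backslash\H)$ from Proposition \ref{weilbrezin} via the unitary $\Phi^*\colon F\mapsto |d\Phi|^{\frac{1}{2}}(F\circ\Phi)$ (using constancy of the Jacobian from the factorization $\Phi=\Phi_1\Phi_2\Phi_3$), pin down the dilation component as $(p,q,s)\mapsto(c^{-\frac{1}{2}}p,c^{-\frac{1}{2}}q,c^{-1}s)$ from $\Phi(Z_N)=Z_{N_l}$, and conclude $\pi_n\circ\Phi\simeq\pi_{\frac{n}{c}}$ by the Stone--von Neumann theorem. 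The only differences are cosmetic: you spell out the intertwining identity $R(h)\circ\Phi^*=\Phi^*\circ R(\Phi(h))$ and the $n=0$ torus argument explicitly, which the paper leaves implicit.
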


Now we can find the spectrum of the Folland--Stein operator $\mathscr{L}_\alpha$. Let $N$ be a lattice of $\H$ with $Z_N=\set{(0,0,cs)\mid s\in\Z}$. Recall that the Folland--Stein operator $\mathscr{L}_\alpha$ is written as
\eqref{fs}.
For the $n=0$ component in \eqref{LNH}, we have
\begin{equation}
\mathscr{L}_\alpha\chi_{\mu,\nu}=\pi^2(\mu^2+\nu^2)\chi_{\mu,\nu},\quad(\mu,\nu)\in\Lambda'_N.
\end{equation}
Hence $\chi_{\mu,\nu}$ is an eigenfunction of $\mathscr{L}_\alpha$ corresponding to the eigenvalue $\pi^2(\mu^2+\nu^2)$.

For the $n\neq 0$ components, it is easy to check that
\begin{equation}
d\pi_{\frac{n}{c}}(P)=\frac{d}{dx},\quad d\pi_{\frac{n}{c}}(Q)=\frac{2\pi in}{c}x,\quad d\pi_{\frac{n}{c}}(S)=\frac{2\pi in}{c}
\end{equation}
on $L^2(\R)$. Therefore $\mathscr{L}_\alpha|_{\widetilde{\mathscr{H}}^{a,b}_n}$ is equivalent to the operator
\begin{equation}
d\pi_{\frac{n}{c}}(\mathscr{L}_\alpha)=\frac{\pi^2n^2x^2}{c^2}-\frac{1}{4}\frac{d^2}{dx^2}-\frac{\pi n}{2c}\alpha.
\end{equation}
Under the change of variable
\begin{equation}
y=\sqrt{\frac{2\pi\abs{n}}{c}}x,
\end{equation}
the operator $d\pi_{\frac{n}{c}}(\mathscr{L}_\alpha)$ becomes
\begin{equation}
\frac{\pi|n|}{2c}\kak{y^2-\frac{d^2}{dy^2}}-\frac{\pi n}{2c}\alpha.
\end{equation}
As is well known, the eigenvalues of the Hamiltonian
$
y^2-\frac{d^2}{dy^2}
$
of the quantum harmonic oscillator are
$\set{2\lambda+1\mid\lambda\in\Z_{\geq 0}}$. Moreover, it is known that the function
\begin{equation}
F_\lambda(y)=H_\lambda(y)e^{-\frac{y^2}{2}} \label{Hermite_fct}
\end{equation}
is a $(2\lambda+1)$-eigenfunction, where $H_\lambda$ is the Hermite polynomial, that is, the polynomial satisfying
\begin{equation}
e^{-t^2+2yt}=\sum_{\lambda=0}^{\infty}\frac{H_\lambda(y)}{\lambda!}t^{\lambda}. \label{Hermite_poly}
\end{equation}
Thus we have the following result.
\begin{prop}[Folland \cite{folland}]\label{eigval}
Let $N$ be a lattice of $\H$ with $Z_N=\set{(0,0,cs)\mid s\in\Z}$. Suppose that $N$ is isomorphic to $N_l$ for some $l\in\Z_{>0}$.
Then the spectrum of $\mathscr{L}_\alpha$ on $N\backslash\H$ is
\begin{equation}
\Set{\frac{\pi\abs{n}}{2c}(2\lambda +1 - \alpha \sgn n) | \lambda\in\Z_{\geq 0},n \in \Z \setminus \set{0}}\cup\set{\pi^2(\mu^2+\nu^2) | (\mu,\nu)\in\Lambda'_{N}}.
\end{equation}
The multiplicity of
$
\frac{\pi\abs{n}}{2c}(2\lambda + 1 - \alpha \sgn n)
$
is $l\abs{n}$.
In particular, if $N=N_l$, the function $W^{a,b}_n(F_\lambda(\sqrt{2\pi\abs{n}}x))$ is an eigenfunction with the eigenvalue
$
\frac{\pi\abs{n}}{2}(2\lambda + 1 - \alpha \sgn n).
$
\end{prop}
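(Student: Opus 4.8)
The plan is to read off the spectrum summand by summand from the orthogonal decomposition \eqref{LNH} of Corollary~\ref{coro:LNH} and then take the union. Since $\mathscr{L}_\alpha$ is a left-invariant differential operator on $\H$ descended to the quotient, it is the infinitesimal operator $dR(\mathcal{C})$ attached to $\mathcal{C}=\frac14\kak{-(P^2+Q^2)+i\alpha S}$ under the right regular representation, and hence preserves each $R$-invariant summand of \eqref{LNH}. On the $n=0$ part this is immediate from the computation $\mathscr{L}_\alpha\chi_{\mu,\nu}=\pi^2(\mu^2+\nu^2)\chi_{\mu,\nu}$ recorded above, which produces the eigenvalues $\pi^2(\mu^2+\nu^2)$, $(\mu,\nu)\in\Lambda'_N$.

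For the summands $\widetilde{\mathscr{H}}^{a,b}_n$ with $n\neq0$ I would transport $\mathscr{L}_\alpha$ through the equivalence $R|_{\widetilde{\mathscr{H}}^{a,b}_n}\simeq\pi_{\frac nc}$ to the operator $d\pi_{\frac nc}(\mathscr{L}_\alpha)$ on $L^2(\R)$. The excerpt has already reduced this, via $y=\sqrt{2\pi\abs{n}/c}\,x$, to the shifted oscillator $\frac{\pi\abs{n}}{2c}\kak{y^2-\frac{d^2}{dy^2}}-\frac{\pi n}{2c}\alpha$; applying it to the Hermite function $F_\lambda$ from \eqref{Hermite_fct}, which is a $(2\lambda+1)$-eigenfunction of $y^2-\frac{d^2}{dy^2}$, gives the eigenvalue $\frac{\pi\abs{n}}{2c}(2\lambda+1)-\frac{\pi n}{2c}\alpha$. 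Writing $n=\abs{n}\sgn n$ and factoring out $\frac{\pi\abs{n}}{2c}$ recasts this as $\frac{\pi\abs{n}}{2c}(2\lambda+1-\alpha\sgn n)$, the asserted form.

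Two points then finish the argument. First, to get equality of spectra rather than a mere inclusion I would invoke completeness: the Hermite functions $\tkak{F_\lambda}_{\lambda\geq0}$ are an orthonormal basis of $L^2(\R)$, so their images exhaust each summand, and together with the $\chi_{\mu,\nu}$ they form an orthonormal eigenbasis of $L^2(N\backslash\H)$ by \eqref{LNH}; hence the spectrum is exactly the stated union. Second, for the multiplicity, the oscillator spectrum is simple, so the value $\frac{\pi\abs{n}}{2c}(2\lambda+1-\alpha\sgn n)$ is attained once in each $\widetilde{\mathscr{H}}^{a,b}_n$; as $a$ runs over $\Z/\abs{n}\Z$ and $b$ over $\Z/l\Z$ there are $l\abs{n}$ such summands, giving multiplicity $l\abs{n}$ for the pair $(n,\lambda)$. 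When $N=N_l$ one has $c=1$ and the equivalence is realized by the Weil--Brezin transform itself; since $R(h)\circ W^{a,b}_n=W^{a,b}_n\circ\pi_n(h)$ (Proposition~\ref{weilbrezin}) forces $\mathscr{L}_\alpha\circ W^{a,b}_n=W^{a,b}_n\circ d\pi_n(\mathscr{L}_\alpha)$, the function $W^{a,b}_n\kak{F_\lambda\kak{\sqrt{2\pi\abs{n}}\,x}}$ is an eigenfunction with eigenvalue $\frac{\pi\abs{n}}{2}(2\lambda+1-\alpha\sgn n)$.

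The hard part is not any single computation---those are essentially done above---but the careful transfer of spectral data through an unbounded intertwiner: one must verify that the unitary $U$ implementing $R|_{\widetilde{\mathscr{H}}^{a,b}_n}\simeq\pi_{\frac nc}$ actually satisfies $\mathscr{L}_\alpha\circ U=U\circ d\pi_{\frac nc}(\mathscr{L}_\alpha)$ on a common core such as the Hermite (smooth) vectors, so that eigenvalues and multiplicities are genuinely preserved and no eigenfunction is lost in passing to the $L^2$-closure. Granting this, assembling the per-summand spectra yields precisely the claimed set with the claimed multiplicities.
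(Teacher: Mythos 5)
Your proposal is correct and follows essentially the same route as the paper: decompose $L^2(N\backslash\H)$ via \eqref{LNH}, handle the $n=0$ part with the characters $\chi_{\mu,\nu}$, and transport $\mathscr{L}_\alpha$ on each $\widetilde{\mathscr{H}}^{a,b}_n$ through the equivalence with $\pi_{\frac{n}{c}}$ to the shifted harmonic oscillator, whose Hermite eigenbasis yields the eigenvalues, the multiplicity count $l\abs{n}$, and (for $N=N_l$, via the Weil--Brezin intertwining) the explicit eigenfunctions. Your added care about completeness of the Hermite basis and about verifying the intertwining of the unbounded operator on a common core is a point the paper leaves implicit, but it does not change the argument.
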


\section{Heisenberg Bieberbach groups}
In this section, we introduce the concept of Heisenberg Bieberbach groups and give some examples of them.
Heisenberg Bieberbach groups are analogues of Bieberbach groups, which are discrete torsion-free subgroups $\Pi\subset\R^3\rtimes O(3)$ with compact quotient $\Pi\backslash\R^3$.
We will arrive at the definition of the Heisenberg Bieberbach group by replacing the commutative Lie group $\R^3$ by the Heisenberg group and the orthogonal group $O(3)$ by the unitary group $U(1)$.

For $A,B\in\R$ with $A^2+B^2=1$, we define the automorphism $U_{A,B}$ of $\H$ by
\begin{equation}
U_{A,B}(p,q,s)=\kak{Ap-Bq,Bp+Aq,s^2+\frac{1}{2}(AB(p^2-q^2)+(A^2-B^2-1)pq)}.
\end{equation}
The automorphism $U_{A,B}$ is the symplectic map corresponding to the unitary matrix
$
\kak{
\begin{array}{cc}
A & -B \\
B & A \\
\end{array}
}
$
and is called a \emph{unitary automorphism}.
It is clear that the group of unitary automorphisms can be identified with the unitary group $U(1)$.
Note that the Lebesgue measure on the polarized Heisenberg group is invariant under unitary automorphisms.

We consider the semidirect product $\H\rtimes U(1)$, that is, the direct product $\H\times U(1)$ with the group multiplication
$
(g,U)\cdot(h,V)=(gU(h),UV)
$
for $(g,U),(h,V)\in\H\times U(1)$. To simplify notation, we write $gU$ instead of $(g,U)$. The semidirect product $\H\rtimes U(1)$ acts on $\H$ from the left by
\begin{equation}
gU\cdot h=gU(h).
\end{equation}
We are now ready to define Heisenberg Bieberbach groups.

\begin{dfn}
A subgroup $\Gamma\subset\H\rtimes U(1)$ is a \emph{Heisenberg Bieberbach group} if $\Gamma$ is a discrete
 torsion-free subgroup and the quotient $\Gamma\backslash\H$ by the left action of $\Gamma$ is compact.
In this situation, $\Gamma\backslash\H$ is called a \emph{Heisenberg Bieberbach manifold}.
\end{dfn}

\begin{rem}
Actually, Bieberbach groups have been generalized to a broader class called \emph{almost-Bieberbach groups}, which includes Heisenberg Bieberbach groups. Let $G$ be a connected nilpotent Lie group and $C\subset\aut(G)$ be a maximal compact subgroup. A discrete torsion-free subgroup $\Gamma\subset G\rtimes C$ with compact quotient $\Gamma\backslash G$ is called an \emph{almost-Bieberbach group modeled on $G$}.
Moreover, the quotient $\Gamma\backslash G$ is called an \emph{infra-nilmanifold}.
For a deeper discussion of almost-Bieberbach groups modeled on $G$, we refer the reader to \cite{dekimpe}.
\end{rem}

One can easily see that any unitary automorphism preserves the CR structure on $\H$ and  the Folland--Stein operator $\mathscr{L}_\alpha$ is invariant under any unitary automorphism. Hence, a Heisenberg Bieberbach manifold $\Gamma\backslash\H$ has a natural CR structure and $\mathscr{L}_\alpha$ can be regarded as a differential operator on $\Gamma\backslash\H$.

We are interested in studying the eigenvalues and eigenfunctions of $\mathscr{L}_\alpha$ on $L^2(\Gamma\backslash\H)$ for a Heisenberg Bieberbach manifold. It is known that $N=\Gamma\cap\H$ is a lattice of $\H$ and $\Gamma/N$ is finite (see Theorem 2.2.1 in \cite{dekimpe}), and so the projection $N\backslash\H\map\Gamma\backslash\H$ is a finite covering. So, the irreducible decomposition of $L^2(N\backslash\H)$ we obtained in the previous section may be useful. We will see how well this idea works in the next two sections. Our approach turns out to be useful for the two examples below.

\begin{ex}\label{example}
(1) Let
\begin{equation}
\varphi=\kak{0,0,\frac{1}{2}}U_{-1,0}\in\H\rtimes U(1).
\end{equation}
The unitary automorphism $U_{-1,0}(p,q,s)=(-p,-q,s)$ is the $\pi$-rotation with respect to the coordinates $(p,q)$. For $l \in \Z_{>0}$, the subgroup
\begin{equation}
\Gamma_{2l,\pi}=\langle N_{2l},\varphi\rangle
\end{equation}
of $\H\rtimes U(1)$ is a Heisenberg Bieberbach group.
Note that $N_{2l}$ is a normal subgroup of $\Gamma_{2l,\pi}$ of index $2$, since $\varphi^2=(0,0,1)\in N_{2l}$ and $\varphi N_{2l}\varphi^{-1}=N_{2l}$.

Since it is obvious that $\Gamma_{2l,\pi}$ is discrete, we will show only the part that it is torsion-free. Suppose that $\Gamma_{2l,\pi}$ is not torsion-free. Then there exist an element $\gamma\in\Gamma_{2l,\pi}$ and $m\in\N$ such that $\gamma^m=e$, where $e=(0,0,0)$ is the identity.
Note that $\gamma \notin N_{2l}$ by the torsion-freeness of $N_{2l}$.
Now $\gamma^2\in N_{2l}$, from which we have $m=2k$ for some $k\in\N$, and so $(\gamma^2)^k=e$. Since $N_{2l}$ is torsion-free and $\gamma^2 \in N_{2l}$, we see that $\gamma^2=e$. On the other hand, $\gamma$ is not in $N_{2l}$, which implies that $\gamma=g\varphi$ for some $g=(\xi,\eta,t)\in N_{2l}$. Hence we get
\begin{equation}
\gamma^2=(0,0,2t-\xi\eta+1).
\end{equation}
Since $\eta$ is even, we have $\gamma^2\neq e$. This is a contradiction.

(2) Let
\begin{equation}
N'_{2l} = \sqrt{2l} \Z \times \sqrt{2l} \Z \times \Z,\quad\psi=\kak{0,0,\frac{1}{4}}U_{0,1}\in\H\rtimes U(1).
\end{equation}
The unitary automorphism $U_{0,1}(p,q,s)=(-q,p,s-pq)$ is the $\frac{\pi}{2}$-rotation with respect to the coordinates $(p,q)$. The subgroup
\begin{equation}
\Gamma_{2,\frac{\pi}{2}}'=\langle N'_{2l},\psi\rangle
\end{equation}
of  $\H\rtimes U(1)$ is also a Heisenberg Bieberbach group. Note that the lattice $N'_{2l}$ is isomorphic to $N_{2l}$. In fact, it is clear that  $S_{2l}(N'_{2l})=N_{2l}$, where $S_{2l}$ is the symplectic map
\begin{equation}
(p,q,s) \mapsto \kak{\frac{p}{\sqrt{2l}},\sqrt{2l}q,s}.
\end{equation}
Now we have $\psi^4=(0,0,1)\in N_{2l}$ and $\psi N'_{2l}\psi^{-1}=N'_{2l}$. Hence, $N'_{2l}$ is a normal subgroup of $\Gamma'_{2,\frac{\pi}{2}}$ of index $4$.

We will prove that $\Gamma'_{2,\frac{\pi}{2}}$ is torsion-free.
Suppose that $\Gamma'_{2l,\frac{\pi}{2}}$ is not torsion-free, and so we can take an element $\gamma\in\Gamma'_{2l,\frac{\pi}{2}}\setminus N'_{2l}$ such that $\gamma^m=e$ for some $m\in\N$.
Since $N'_{2l}$ is a normal subgroup of $\Gamma'_{2l,\frac{\pi}{2}}$ of index $4$, the element $\gamma$ is written as
$\gamma = g \psi^j$ for some $g = ( \xi,\eta,t) \in N'_{2l}$ and $j \in \set{1,2,3}$.
If $j=2$, we have $\gamma = g \psi^2 = g \varphi$. By the same argument as in (1), we see that $\gamma$ is not a torsion element.
If $j=1$, by a direct calculation, we have
\begin{align}
&\gamma^2 = g U_{0,1}(g) \psi^2, \\
&\gamma^4 = (0,0,4t + (\xi - \eta)^2 + 1). \label{gamma^4}
\end{align}
In particular, $\gamma^2 \notin N'_{2l}$ and $\gamma^4 \in N'_{2l}$.
Hence we see that $m=4k$ for some $k\in\N$, and so $(\gamma^4)^k=e$. Since $N '_{2l}$ is torsion-free, we get $\gamma^4=e$.
Now we have
\begin{equation}
4t + (\xi- \eta)^2 + 1 \equiv 1,\ 3 \bmod 4. \label{mod4}
\end{equation}
It follows from \eqref{gamma^4} and \eqref{mod4} that $\gamma^4 \neq e$, which is a contradiction.
The proof for the other case is similar.
\end{ex}

\section{Eigenvalues and eigenfunctions of $\mathscr{L}_\alpha$ on $\Gamma_{2l,\pi}\backslash\mathbb{H}$}
In this section, we study the eigenvalues and eigenfunctions of the Folland--Stein operator $\mathscr{L}_\alpha$ on $\Gamma_{2l,\pi}\backslash\H$ for $l\in\Z_{>0}$.

Recall from Example \ref{example} (1) that
\begin{equation}
\Gamma_{2l,\pi}=\langle N_{2l},\varphi\rangle,\quad\varphi=\kak{0,0,\frac{1}{2}}U_{-1,0}.
\end{equation}
Since functions on the Heisenberg Bieberbach manifold $\Gamma_{2l,\pi}\backslash\H$ can be identified with $\varphi$-invariant functions on the covering $N_{2l}\backslash\H$, Proposition \ref{eigval} implies that the eigenvalues of $\mathscr{L}_\alpha$ on $L^2(\Gamma_{2l,\pi}\backslash\H)$ must take the values in
\begin{equation}
\Set{\frac{\pi\abs{n}}{2}(2\lambda + 1 - \alpha \sgn n) | \lambda\in\Z_{\geq 0},n \in \Z \setminus \set{0}}\cup\set{\pi^2(\mu^2+\nu^2) | (\mu,\nu)\in\Lambda'_{N_{2l}}}.
\end{equation}

For the Heisenberg manifold $N_{2l}\backslash\H$, by Proposition \ref{weilbrezin}, we have the decomposition
\begin{equation}
L^2(N_{2l}\backslash\H)=\mathscr{H}_0\oplus\bigoplus_{n\neq 0}\mathscr{H}_n
\end{equation}
and
\begin{equation}
\mathscr{H}_n=\bigoplus_{a\in\Z/|n|\Z,b\in\Z/2l\Z}\mathscr{H}^{a,b}
\end{equation}
for $n\neq 0$.
By Proposition \ref{eigval}, an eigenfunction $G_{n,\lambda}\in\mathscr{H}_n$ with the eigenvalue
\begin{equation}
\frac{\pi\abs{n}}{2}(2\lambda + 1 - \alpha \sgn n),\quad\lambda\in\Z_{\geq 0},\ n \in \Z \setminus \set{0}
\end{equation}
can be uniquely written as
\begin{equation}
G_{n,\lambda}=\sum_{a\in\Z/\abs{n}\Z,b\in\Z/2l\Z}c_{n,\lambda}^{a,b} W_n^{a,b} F_{n,\lambda} \label{WBdecomp}
\end{equation}
for some $c_{n,\lambda}^{a,b}\in\C$. Here, we write $F_{n,\lambda} (x) = F_\lambda (\sqrt{2 \pi \abs{n}} x)$, where $F_\lambda$ is the Hermite function mentioned in \eqref{Hermite_fct}. Then $\varphi^*G_{n,\lambda}=G_{n,\lambda}$ if and only if $G_{n,\lambda}$ is an eigenfunction on $\Gamma_{2l,\pi}\backslash\H$.
In order to determine such $G_{n,\lambda}$, we describe the function $\varphi^*(W_n^{a,b}F_{n,\lambda})$ in concrete terms.
\begin{prop} \label{phiW^ab}
For $n\neq 0$ and $\lambda\in\Z_{\geq 0}$, we have
\begin{itemize}
\item[(i)] $\varphi^*(W^{a,0}_nF_{n,\lambda})=e^{\pi i(n+\lambda)}W^{-a,0}_nF_{n,\lambda}$.
\item[(ii)] For $b \neq 0$, $\varphi^*(W^{a,b}_nF_{n,\lambda})=e^{\pi i(n+\lambda)}W^{-a-1,-b}_nF_{n,\lambda}$ if $n>0$, and
$\varphi^*W^{a,b}_nF_{n,\lambda}=e^{\pi i(n+\lambda)}(W^{-a+1,-b}_n F_{n,\lambda})$ if $n<0$.
\end{itemize}
\end{prop}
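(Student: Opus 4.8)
The plan is to compute the action of $\varphi$ on $\H$ explicitly, substitute it into the defining formula \eqref{WB} of the Weil--Brezin transform, and then reindex the resulting sum so as to recognize it as another Weil--Brezin transform (with the parity of the Hermite functions producing the scalar factor).

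First I would unwind the left action $gU\cdot h=gU(h)$ of $\H\rtimes U(1)$. Using $U_{-1,0}(p,q,s)=(-p,-q,s)$ and the polarized product $\star$, a direct computation gives $\varphi\cdot(p,q,s)=(0,0,\tfrac12)\star(-p,-q,s)=(-p,-q,s+\tfrac12)$, so that $(\varphi^*f)(p,q,s)=f(-p,-q,s+\tfrac12)$. Writing $\theta_k=k+\frac{j}{\abs{n}}+\frac{u}{2ln}$ (with $0\le j\le\abs{n}-1$, $a\equiv j$, and $0\le u\le 2l-1$, $b\equiv u$, since the relevant lattice here is $N_{2l}$), the transform reads $(W_n^{a,b}F_{n,\lambda})(p,q,s)=e^{2\pi ins}\sum_{k\in\Z}F_{n,\lambda}(p+\theta_k)e^{2\pi in\theta_kq}$. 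Substituting the formula for $\varphi\cdot h$ yields
\begin{equation}
(\varphi^*W_n^{a,b}F_{n,\lambda})(p,q,s)=e^{\pi in}e^{2\pi ins}\sum_{k\in\Z}F_{n,\lambda}(-p+\theta_k)e^{-2\pi in\theta_kq}.
\end{equation}

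The next step is to exploit the parity of the Hermite functions. Since $H_\lambda(-y)=(-1)^\lambda H_\lambda(y)$ and $e^{-y^2/2}$ is even, we have $F_{n,\lambda}(-x)=(-1)^\lambda F_{n,\lambda}(x)$, so $F_{n,\lambda}(-p+\theta_k)=(-1)^\lambda F_{n,\lambda}(p-\theta_k)$. Combining $(-1)^\lambda=e^{\pi i\lambda}$ with the central phase $e^{\pi in}$ produces exactly the prefactor $e^{\pi i(n+\lambda)}$ asserted in the statement. Writing $p-\theta_k=p+(-\theta_k)$ and reindexing $k\mapsto -k$, the remaining sum becomes $\sum_{k\in\Z}F_{n,\lambda}(p+\theta'_k)e^{2\pi in\theta'_kq}$ with $\theta'_k=k-\frac{j}{\abs{n}}-\frac{u}{2ln}$. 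Thus it only remains to rewrite the coset $-\frac{j}{\abs{n}}-\frac{u}{2ln}+\Z$ in the normalized form $\frac{j'}{\abs{n}}+\frac{u'}{2ln}+\Z$ with $0\le j'\le\abs{n}-1$ and $0\le u'\le 2l-1$, and to read off the corresponding pair $(a',b')=(j',u')$.

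The main obstacle, and the only genuinely delicate point, is this final index bookkeeping. When $b\equiv u\equiv0$ there is no fractional $\frac{u}{2ln}$ term, and one simply gets $j'\equiv -j$, $u'=0$, giving case (i) regardless of the sign of $n$. When $u\ne0$, normalizing the $u$-term into the range $0\le u'\le 2l-1$ forces a carry of $\pm\frac{1}{\abs{n}}$ into the $j$-slot whose sign depends on that of $n$: for $n>0$ one finds $-\frac{j}{\abs{n}}-\frac{u}{2ln}=-\frac{j+1}{\abs{n}}+\frac{2l-u}{2ln}$, so $j'\equiv -j-1$ and $u'\equiv -u$, whereas for $n<0$ one has $-\frac{u}{2ln}=+\frac{u}{2l\abs{n}}$ and the analogous rearrangement gives $j'\equiv -j+1$ and $u'\equiv -u$. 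Reducing $j'$ modulo $\abs{n}$ only shifts $2lj'$ by multiples of $2l\abs{n}$ and hence preserves the coset, so these congruences are exactly the identifications $(a',b')=(-a-1,-b)$ for $n>0$ and $(a',b')=(-a+1,-b)$ for $n<0$ claimed in case (ii). I would organize the proof around the split $u=0$ versus $u\ne0$, and within the latter around the two sign cases, treating the parity computation and the explicit form of $\varphi\cdot h$ as the routine inputs.
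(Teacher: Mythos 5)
Your proposal is correct and follows essentially the same route as the paper's proof: compute $\varphi(p,q,s)=(-p,-q,s+\tfrac12)$, substitute into the Weil--Brezin sum, pull out $e^{\pi i(n+\lambda)}$ from the central phase together with the Hermite parity, and reindex/normalize the fractional offsets, with the sign-dependent carry of $\frac{1}{\abs{n}}$ producing the $-a-1$ versus $-a+1$ shift. The only cosmetic difference is that the paper performs the bookkeeping via the substitution $k'=-k-1$ and the identity $-\frac{j}{n}-\frac{u}{2ln}=-1+\frac{n-j-1}{n}+\frac{2l-u}{2ln}$, while you phrase the same step as a coset normalization split into the cases $u=0$ and $u\neq 0$.
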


\begin{proof}
(i) Since
\begin{equation}
\varphi(p,q,s)=\kak{-p,-q,s+\frac{1}{2}},
\end{equation}
it follows from the definition \eqref{WB} of the Weil--Brezin transform that
\begin{align*}
\varphi^*(W_n^{a,0}F_{n,\lambda})(p,q,s)&=(W^{a,0}F_{n,\lambda})\kak{-p,-q,s+\frac{1}{2}} \\
&=e^{2\pi in(s+\frac{1}{2})} \sum_{k\in\Z} F_{n,\lambda} \kak{-p+k+\frac{j}{\abs{n}}}
e^{2\pi in\kak{k+\frac{j}{\abs{n}}}(-q)}.
\end{align*}
The last expression can be rewritten as
\begin{equation}
e^{\pi in}e^{2\pi ins}\sum_{k\in\Z} F_{n,\lambda} \kak{-p+k+\frac{j}{\abs{n}}} e^{2\pi in\kak{-k-\frac{j}{\abs{n}}}q}.
\end{equation}
Since it is clear from \eqref{Hermite_poly} that $F_{n,\lambda} (-x) = e^{\pi i\lambda} F_{n,\lambda} (x)$ for any $x\in\R$, we get
\begin{align*}
\varphi^*(W_n^{a,0}F_{n,\lambda})(&p,q,s)
=e^{\pi i(n+\lambda)} e^{2\pi ins} \sum_{k\in\Z} F_{n,\lambda} \kak{p-k-\frac{j}{\abs{n}}} e^{2\pi in\kak{-k-\frac{j}{\abs{n}}}q}.
\end{align*}
Now
\begin{equation}
-\frac{j}{\abs{n}}=-1+\frac{\abs{n}-j}{\abs{n}},
\end{equation}
which gives
\begin{align*}
\varphi^*(W_n^{a,0}F_{n,\lambda})(&p,q,s)
=e^{\pi i(n+\lambda)} e^{2\pi ins} \sum_{k\in\Z} F_{n,\lambda} \kak{p-k-1+\frac{\abs{n}-j}{\abs{n}}}
e^{2\pi in\kak{-k-1+\frac{\abs{n}-j}{\abs{n}}}q}
\end{align*}
and consequently
\begin{align*}
\varphi^*(W_n^{a,0}F_{n,\lambda})(p,q,s)
&=e^{\pi i(n+\lambda)}e^{2\pi ins}\sum_{k'\in\Z}F_{n,\lambda}\kak{p+k'+\frac{\abs{n}-j}{\abs{n}}}e^{2\pi in\kak{k'+\frac{\abs{n}-j}{\abs{n}}}q} \\
&=e^{\pi i(n+\lambda)}(W_n^{-a,0}F_{n,\lambda})(p,q,s),
\end{align*}
where $k'=-k-1$.

(ii) Suppose that $n>0$. As in the proof of (i), we obtain
\begin{align*}
\varphi^*(W_n^{a,b}F_{n,\lambda})(p,q,s)&=W_n^{a,b}F_{n,\lambda}\kak{-p,-q,s+\frac{1}{2}} \\
&=e^{2\pi in(s+\frac{1}{2})} \sum_{k\in\Z} F_{n,\lambda} \kak{-p+k+\frac{j}{n}+\frac{u}{2ln}}
e^{2\pi in\kak{k+\frac{j}{n}+\frac{u}{2ln}}(-q)} \\
&=e^{\pi i(n+\lambda)}e^{2\pi ins} \sum_{k\in\Z} F_{n,\lambda} \kak{p-k-\frac{j}{n}-\frac{u}{2ln}}
 e^{2\pi in\kak{-k-\frac{j}{n}-\frac{u}{2ln}}q}.
\end{align*}
Since
\begin{equation}
-\frac{j}{n}-\frac{u}{2ln}=-1+\frac{n-j-1}{n}+\frac{2l-u}{2ln},
\end{equation}
we conclude that
\begin{align*}
\varphi^*(W_n^{a,b} &F_{n,\lambda})(p,q,s) \\
&=e^{\pi i(n+\lambda)}e^{2\pi ins} \sum_{k\in\Z} F_{n,\lambda} \kak{p-k-1+\frac{n-j-1}{n}+\frac{2l-u}{2ln}}
e^{2\pi in\kak{-k-1+\frac{n-j-1}{n}+\frac{2l-u}{2ln}}q} \\
&=e^{\pi i(n+\lambda)}e^{2\pi ins }\sum_{k'\in\Z} F_{n,\lambda} \kak{p+k'+\frac{n-j-1}{n}+\frac{2l-u}{2ln}}
e^{2\pi in\kak{k'+\frac{n-j-1}{n}+\frac{2l-u}{2ln}}q},
\end{align*}
which equals
$
e^{\pi i(n+\lambda)}(W_n^{-a-1,-b}F_{n,\lambda})(p,q,s).
$
The proof for $n<0$ is similar.
\end{proof}
By this proposition, we can get a necessary and sufficient condition for an eigenfunction on $N_{2l}\backslash\H$ to be $\varphi$-invariant. Let $G_{n,\lambda}\in\mathscr{H}_n$ be an eigenfunction with the eigenvalue
\begin{equation}
\frac{\pi\abs{n}}{2}(2\lambda +1 - \alpha \sgn n)
\end{equation}
for $\lambda\in\Z_{\geq 0}$, $n \in \Z \setminus \set{0}.$ If the function is written as \eqref{WBdecomp},
the following corollary obviously holds.
\begin{cor}\label{cor:phig}
Let $\lambda\in\Z_{\geq 0}$.
\begin{itemize}
\item[(i)] For $n>0$, $\varphi^*G_{n,\lambda}=G_{n,\lambda}$ if and only if
$e^{\pi i(n+\lambda)}c_{n,\lambda}^{a,0}=c_{n,\lambda}^{-a,0}$ and $e^{\pi i(n+\lambda)}c_{n,\lambda}^{a,b}=c_{n,\lambda}^{-a-1,-b}$ for all $a \in \Z / \abs{n} \Z$, $0\neq b\in\Z/2l\Z$.
\item[(ii)] For $n<0$, $\varphi^*G_{n,\lambda}=G_{n,\lambda}$ if and only if
$e^{\pi i(n+\lambda)}c_{n,\lambda}^{a,0}=c_{n,\lambda}^{-a,0}$ and $e^{\pi i(n+\lambda)}c_{n,\lambda}^{a,b}=c_{n,\lambda}^{-a+1,-b}$ for all $a \in \Z / \abs{n} \Z$, $0\neq b\in\Z/2l\Z$.
\end{itemize}
\end{cor}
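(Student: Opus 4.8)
The plan is to use nothing beyond the linearity of $\varphi^*$, the formulas of Proposition \ref{phiW^ab}, and the directness of the decomposition $\mathscr{H}_n=\bigoplus_{a,b}\mathscr{H}_n^{a,b}$ supplied by Proposition \ref{weilbrezin}. First I would record the structural point that makes the statement meaningful: since $\varphi$ is a unitary automorphism, $\varphi^*$ commutes with $\mathscr{L}_\alpha$, and the computations in Proposition \ref{phiW^ab} show that $\varphi^*$ sends each $W_n^{a,b}F_{n,\lambda}$ to a scalar multiple of another $W_n^{a',b'}F_{n,\lambda}$, hence preserves $\mathscr{H}_n$. Therefore $\varphi^* G_{n,\lambda}$ is again an eigenfunction lying in $\mathscr{H}_n$ with the same eigenvalue, and the equation $\varphi^* G_{n,\lambda}=G_{n,\lambda}$ is a genuine identity between two elements of a single finite-dimensional piece of the decomposition.

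Next, starting from the expansion \eqref{WBdecomp}, I would apply $\varphi^*$ term by term and substitute Proposition \ref{phiW^ab}. For $n>0$ this produces
\begin{equation}
\varphi^* G_{n,\lambda}=e^{\pi i(n+\lambda)}\Bigl(\sum_{a}c_{n,\lambda}^{a,0}\,W_n^{-a,0}F_{n,\lambda}+\sum_{a,\,b\neq 0}c_{n,\lambda}^{a,b}\,W_n^{-a-1,-b}F_{n,\lambda}\Bigr).
\end{equation}
The only genuine manipulation is the relabelling of the index sets: in the $b=0$ sum I set $a'=-a$, and in the $b\neq 0$ sum I set $a'=-a-1$, $b'=-b$; both are bijections of $\Z/\abs{n}\Z$ and $\Z/2l\Z$ respectively. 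After relabelling, the coefficient of $W_n^{a',0}F_{n,\lambda}$ in $\varphi^* G_{n,\lambda}$ becomes $e^{\pi i(n+\lambda)}c_{n,\lambda}^{-a',0}$, and that of $W_n^{a',b'}F_{n,\lambda}$ becomes $e^{\pi i(n+\lambda)}c_{n,\lambda}^{-a'-1,-b'}$.

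Finally I would compare these with the coefficients of $G_{n,\lambda}$ itself. This comparison is legitimate because the family $\tkak{W_n^{a,b}F_{n,\lambda}}_{a,b}$ is linearly independent: each $W_n^{a,b}F_{n,\lambda}$ is nonzero, as $F_{n,\lambda}\neq 0$ and $W_n^{a,b}$ is unitary hence injective, and the distinct members lie in distinct summands $\mathscr{H}_n^{a,b}=\Image W_n^{a,b}$ of the direct sum of Proposition \ref{weilbrezin}. Equating coefficients gives $c_{n,\lambda}^{a,0}=e^{\pi i(n+\lambda)}c_{n,\lambda}^{-a,0}$ and $c_{n,\lambda}^{a,b}=e^{\pi i(n+\lambda)}c_{n,\lambda}^{-a-1,-b}$; since $e^{\pi i(n+\lambda)}=\pm 1$ is real and squares to $1$, these are equivalent to the stated relations $e^{\pi i(n+\lambda)}c_{n,\lambda}^{a,0}=c_{n,\lambda}^{-a,0}$ and $e^{\pi i(n+\lambda)}c_{n,\lambda}^{a,b}=c_{n,\lambda}^{-a-1,-b}$, which is part (i). For $n<0$ the argument is identical, except that Proposition \ref{phiW^ab} now carries the index $-a+1$ in place of $-a-1$, so the substitution $a'=-a+1$, $b'=-b$ yields the relation $e^{\pi i(n+\lambda)}c_{n,\lambda}^{a,b}=c_{n,\lambda}^{-a+1,-b}$ of part (ii).

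There is no analytic obstacle here, since Proposition \ref{phiW^ab} has already done the hard work of transforming the transcendental sums; the only thing to keep straight is the bookkeeping of the index shifts modulo $\abs{n}$ and $2l$ and the verification that the relabellings are bijections. This is exactly why the statement follows at once from the preceding proposition.
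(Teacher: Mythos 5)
Your proof is correct and is exactly the argument the paper leaves implicit: the paper states the corollary "obviously holds" from Proposition \ref{phiW^ab}, and your write-up supplies the intended bookkeeping---apply $\varphi^*$ termwise to \eqref{WBdecomp}, relabel the indices bijectively, and equate coefficients using the uniqueness of the expansion coming from the direct sum in Proposition \ref{weilbrezin}. Your observation that $e^{\pi i(n+\lambda)}=\pm 1$ reconciles the two equivalent ways of writing the coefficient relations is also the right (and only) subtlety to note.
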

By Corollary \ref{cor:phig}, we get the result we have been looking for from the beginning.
\begin{thm}\label{cor:dim}
Let $\lambda\in\Z_{\geq 0}$ and $n\neq 0$. Let $\mathscr{H}_{n,\lambda}^\varphi$ be the subspace of eigenfunctions $G_{n,\lambda}\in\mathscr{H}_n$ such that $\varphi^*G_{n,\lambda}=G_{n,\lambda}$. Then we have
\begin{align*}
  \dim \mathscr{H}_{n,\lambda}^\varphi=
	\begin{cases}
	l\abs{n}+1 & \text{if}\ \abs{n}+\lambda\ \text{is even}, \\
	l\abs{n}-1 & \text{if}\ \abs{n}+\lambda\ \text{is odd}.
	\end{cases}
\end{align*}
\end{thm}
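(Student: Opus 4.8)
The plan is to turn the $\varphi$-invariance condition of Corollary \ref{cor:phig} into a homogeneous linear system on the coefficient vector $(c_{n,\lambda}^{a,b})$ and to compute the dimension of its solution space from the orbit structure of a natural involution on the index set $\Z/\abs{n}\Z\times\Z/2l\Z$. First I would fix the ambient dimension. Let $\mathscr{H}_{n,\lambda}\subset\mathscr{H}_n$ be the full eigenspace for the eigenvalue $\frac{\pi\abs{n}}{2}(2\lambda+1-\alpha\sgn n)$, so that $\mathscr{H}_{n,\lambda}^\varphi$ is its $\varphi$-fixed subspace. By Proposition \ref{weilbrezin} the lines $\C\,W_n^{a,b}F_{n,\lambda}$, indexed by $(a,b)\in\Z/\abs{n}\Z\times\Z/2l\Z$, are mutually orthogonal and span $\mathscr{H}_{n,\lambda}$, so $\dim\mathscr{H}_{n,\lambda}=2l\abs{n}$ and an element $G_{n,\lambda}$ is faithfully encoded by its $2l\abs{n}$ coefficients $c_{n,\lambda}^{a,b}$, which I abbreviate to $c^{a,b}$ with $n,\lambda$ fixed.

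Next, set $\epsilon=e^{\pi i(n+\lambda)}=(-1)^{\abs{n}+\lambda}\in\{\pm1\}$, using that $n$ and $\abs{n}$ have equal parity. For $n>0$, Corollary \ref{cor:phig} expresses $\varphi^*G_{n,\lambda}=G_{n,\lambda}$ as $c^{\sigma(a,b)}=\epsilon\,c^{a,b}$, where $\sigma$ sends $(a,0)\mapsto(-a,0)$ on the slice $b=0$ and $(a,b)\mapsto(-a-1,-b)$ on the slices $b\neq0$; a direct check gives $\sigma\circ\sigma=\mathrm{id}$, so $\sigma$ is an involution of the whole index set. For a relation $c^{\sigma(i)}=\epsilon\,c^i$ attached to an involution, the solution space splits over the orbits of $\sigma$: each free $2$-cycle $\{i,\sigma(i)\}$ contributes one dimension (one coefficient is free and the other is determined, consistency being automatic since $\epsilon^2=1$), while each fixed point $i=\sigma(i)$ contributes one dimension if $\epsilon=1$ and none if $\epsilon=-1$, the relation there degenerating to $(\epsilon-1)c^i=0$. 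Writing $f$ for the number of fixed points and $t$ for the number of $2$-cycles, so that $2t+f=2l\abs{n}$, the dimension equals $t+f$ when $\epsilon=1$ and $t$ when $\epsilon=-1$; both read uniformly as $\tfrac12(2l\abs{n}+\epsilon f)=l\abs{n}+\tfrac{\epsilon f}{2}$.

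The theorem therefore reduces to the fixed-point count, which is the one computation I would carry out carefully. On the slice $b=0$ a fixed point requires $2a\equiv0\pmod{\abs{n}}$; on a slice $b\neq0$ it requires $2b\equiv0\pmod{2l}$ (forcing $b=l$) together with $2a+1\equiv0\pmod{\abs{n}}$. Splitting on the parity of $\abs{n}$: when $\abs{n}$ is even the slice $b=0$ yields the two fixed points $a=0,\abs{n}/2$ while $2a+1\equiv0$ has no solution, whereas when $\abs{n}$ is odd the slice $b=0$ yields only $a=0$ and the slice $b=l$ yields the unique $a$ with $2a\equiv-1\pmod{\abs{n}}$. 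Either way $f=2$, so $\dim\mathscr{H}_{n,\lambda}^\varphi=l\abs{n}+\epsilon$, which is $l\abs{n}+1$ when $\abs{n}+\lambda$ is even and $l\abs{n}-1$ when $\abs{n}+\lambda$ is odd. The case $n<0$ is handled identically after replacing $\sigma$ on the slices $b\neq0$ by $(a,b)\mapsto(-a+1,-b)$; the only change is that the type-II congruence becomes $2a\equiv1\pmod{\abs{n}}$, which has the same number of solutions, so again $f=2$.

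The only genuinely delicate point is making the fixed-point bookkeeping exhaustive and checking that the even and odd cases for $\abs{n}$ conspire to the constant value $f=2$ (two fixed points all living in the slice $b=0$ when $\abs{n}$ is even, but one in $b=0$ and one in $b=l$ when $\abs{n}$ is odd). Everything else is the orbit-counting lemma for an involution, which is routine linear algebra once the consistency condition $\epsilon^2=1$ is noted.
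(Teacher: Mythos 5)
Your proof is correct and, at bottom, rests on the same reduction as the paper's: Corollary \ref{cor:phig} converts $\varphi$-invariance into linear identifications among the coefficients $c^{a,b}_{n,\lambda}$ of the expansion \eqref{WBdecomp}, and the dimension is then obtained by counting. The difference is in how the count is organized. The paper treats a single case ($n>0$ with $n,\lambda$ both even, i.e.\ your $\epsilon=1$), writes out the chains of equal coefficients explicitly, counts $ln+1$ independent ones, and leaves the remaining sign and parity cases to the reader as analogous. You instead isolate a uniform lemma --- for an involution $\sigma$ of the index set with relations $c^{\sigma(i)}=\epsilon c^{i}$, $\epsilon=\pm1$, the solution space has dimension $l\abs{n}+\tfrac{\epsilon f}{2}$, where $f$ is the number of fixed points --- and then verify $f=2$ in every case (both fixed points in the slice $b=0$ when $\abs{n}$ is even; one in $b=0$ and one in $b=l$ when $\abs{n}$ is odd, and likewise for $n<0$ after replacing $-a-1$ by $-a+1$). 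This buys genuine completeness: the $\epsilon=-1$ case, where the two fixed points force vanishing coefficients and produce the $-1$ in the statement, is proved rather than asserted by analogy. It also makes transparent that your formula is the averaging/trace formula $\dim\mathscr{H}^\varphi_{n,\lambda}=\tfrac12\bigl(\dim\mathscr{H}_{n,\lambda}+\operatorname{tr}\varphi^*\bigr)$ with $\operatorname{tr}\varphi^*=\epsilon f=2e^{\pi i(n+\lambda)}$, which is precisely the character computation $\chi_\rho(\varphi)=2e^{\pi i(n+\lambda)}$ that the paper performs later, for the group $\Z/4\Z$, in the proof of Theorem \ref{dim2ln}; so your argument unifies this theorem with that later method. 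One small point you share with the paper: the claim that the $2l\abs{n}$ lines $\C\,W^{a,b}_nF_{n,\lambda}$ exhaust the eigenspace $\mathscr{H}_{n,\lambda}$ uses, beyond Proposition \ref{weilbrezin}, the fact that the relevant eigenspace of the harmonic oscillator in each $\mathscr{H}^{a,b}_n\simeq L^2(\R)$ is one-dimensional; this is standard, and is equally implicit in the paper's uniqueness assertion for \eqref{WBdecomp}.
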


\begin{proof}
Suppose that $n>0$ and that both $n$ and $\lambda$ are even. By Corollary \ref{cor:phig} (i),
$\varphi^* G_{n,\lambda} = G_{n,\lambda}$ if and only if
\begin{equation}
c_{n,\lambda}^{1,0}=c_{n,\lambda}^{-1,0},\  c_{n,\lambda}^{2,0}=c_{n,\lambda}^{-2,0},\ \dotsc,\ c_{n,\lambda}^{\frac{n}{2}-1,0}=c_{n,\lambda}^{-\frac{n}{2}+1,0},
\end{equation}
\begin{equation}
c^{0,u}_{n,\lambda} = c^{-1,-u}_{n,\lambda},\ c_{n,\lambda}^{1,u}=c_{n,\lambda}^{-2,-u},\  c_{n,\lambda}^{2,u}=c_{n,\lambda}^{-3,-u},\ \dotsc,\ c_{n,\lambda}^{n-1,u}=c_{n,\lambda}^{0,2l-u}
\end{equation}
for $1 \leq u \leq l - 1$ and
\begin{equation}
c_{n,\lambda}^{0,l}=c_{n,\lambda}^{-1,l},\ c_{n,\lambda}^{1,l}=c_{n,\lambda}^{-2,l},\ \dotsc,\ c_{n,\lambda}^{\frac{n}{2}-1,l}=c_{n,\lambda}^{-\frac{n}{2},l}.
\end{equation}
Hence the independent coefficients are $c_{n,\lambda}^{0,0}$, $c_{n,\lambda}^{1,0}$, $\dotsc$, $c_{n,\lambda}^{\frac{n}{2},0}$,
$c_{n,\lambda}^{1,u}$, $\dotsc$, $c_{n,\lambda}^{n-1,u}$ for $0 \leq u \leq l-1$, $c_{n,\lambda}^{0,l}$, $\dotsc$, $c_{n,\lambda}^{\frac{n}{2}-1,l}$, the number of which is $ln+1$. The other cases can be proved in the same way.
\end{proof}

We can apply Theorem \ref{cor:dim} to show Weyl's law for the Folland--Stein operator on $\Gamma_{2l,\pi}\backslash\H$.
For Heisenberg manifolds, Taylor \cite{taylor}, Strichartz \cite{strichartz} and Fan--Kim--Zeytuncu
\cite{fan} obtained an asymptotic formula for the eigenvalue counting function for $\FS{\alpha}$.
More precisely, Taylor and Strichartz got an asymptotic formula for $\alpha = 0$.
Fan--Kim--Zeytuncu generalized it to $-1 \leq \alpha \leq 1$ as follows, where
$N_{N\backslash\H}(t)$ is the number of positive eigenvalues of $\FS{\alpha}$ on $N\backslash\H$ (counted with multiplicity) which are less than or equal to $t > 0$.

\begin{prop}[Fan--Kim--Zeytuncu \cite{fan}]\label{weyllattice}
Let $N$ be a lattice. For $-1 \leq \alpha \leq 1$, we have
\begin{equation}
\lim_{t\to\infty}\frac{N_{N\backslash\H}(t)}{t^2}=A_\alpha\vol(N\backslash\H),
\end{equation}
where
\begin{align*}
  A_\alpha=
	\begin{cases}
	\dfrac{1}{\pi^2} \displaystyle \int_{-\infty}^{\infty} \dfrac{x}{\sinh x} e^{- \alpha x} \,dx & \text{if} \ -1 < \alpha < 1, \\
	\dfrac{1}{2\pi^2} \displaystyle\int_{-\infty}^{\infty} \kak{\dfrac{x}{\sinh x}}^2 \,dx & \text{if} \ \alpha = \pm 1.
	\end{cases}
\end{align*}
\end{prop}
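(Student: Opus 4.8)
The plan is to read off the asymptotics directly from the explicit spectrum in Proposition~\ref{eigval}, with no appeal to general Tauberian theory. First I would discard the $n=0$ part: the eigenvalues $\pi^2(\mu^2+\nu^2)$, $(\mu,\nu)\in\Lambda'_N$, are counted by the number of points of $\Lambda'_N$ in a disk of radius $\sqrt t/\pi$, which is $O(t)$ and hence invisible after dividing by $t^2$. All the content is therefore in the $n\neq 0$ eigenvalues $\frac{\pi\abs n}{2c}(2\lambda+1-\alpha\sgn n)$ with multiplicity $l\abs n$.

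For $-1<\alpha<1$ I would split these according to $\sgn n$. Writing $m=\abs n$, the two families have eigenvalues $\frac{\pi m}{2c}(2\lambda+1-\alpha)$ and $\frac{\pi m}{2c}(2\lambda+1+\alpha)$, each with multiplicity $lm$. For fixed $\lambda$, summing the multiplicity over all $m\ge 1$ with eigenvalue $\le t$ gives $l\sum_{m=1}^{M_\lambda}m\sim\frac l2 M_\lambda^2$, where $M_\lambda=\frac{2ct}{\pi(2\lambda+1\mp\alpha)}$. The decisive structural point is that $m$ is summed — producing a square — while $\lambda$ stays discrete, so that summing both families over $\lambda$ yields
\begin{equation}
N_{N\backslash\H}(t)\sim\frac l2\kak{\frac{2ct}{\pi}}^2\sum_{\lambda=0}^{\infty}\kak{\frac{1}{(2\lambda+1-\alpha)^2}+\frac{1}{(2\lambda+1+\alpha)^2}}.
\end{equation}
Replacing $\sum_{m\le M_\lambda}m$ by $\frac12 M_\lambda^2$ costs $O(M_\lambda)$ per $\lambda$, which sums to $O(t\log t)=o(t^2)$, and the tail of the (convergent) $\lambda$-series beyond $M_\lambda\ge 1$ contributes only $O(t)$; so the displayed equivalence is legitimate.

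It then remains to match constants. I would use the elementary identity, obtained by splitting at $0$ and expanding $\frac1{\sinh x}=2\sum_{\lambda\ge0}e^{-(2\lambda+1)x}$ for $x>0$,
\begin{equation}
\int_{-\infty}^{\infty}\frac{x}{\sinh x}e^{-\alpha x}\,dx=2\sum_{\lambda=0}^{\infty}\kak{\frac{1}{(2\lambda+1-\alpha)^2}+\frac{1}{(2\lambda+1+\alpha)^2}},
\end{equation}
together with $\vol(N\backslash\H)=lc^2$. The latter follows because the automorphism $\Phi=\Phi_1\Phi_2\Phi_3$ of Proposition~\ref{classlattice} has constant Jacobian: its symplectic and inner factors are volume-preserving and its dilation scales volume by $c^{-2}$, so $\vol(N\backslash\H)=c^2\vol(N_l\backslash\H)=lc^2$. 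Substituting into the display gives $N_{N\backslash\H}(t)/t^2\to A_\alpha\vol(N\backslash\H)$ for $-1<\alpha<1$.

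The hard part is the boundary $\alpha=\pm1$. At $\alpha=1$ the $\lambda=0$ term of the $n>0$ family is the eigenvalue $0$, so the series above diverges: this is exactly the appearance of the infinite-dimensional kernel of $\Box_b=\mathscr{L}_1$, whose zero modes are excluded from the count of positive eigenvalues. Here I would instead observe that both families collapse to the eigenvalues $\frac{\pi m\lambda}{c}$ with $m\ge1$, $\lambda\ge1$ (the $n<0$ family under $\lambda\mapsto\lambda+1$), so that
\begin{equation}
N_{N\backslash\H}(t)\sim l\kak{\frac{ct}{\pi}}^2\sum_{\lambda=1}^{\infty}\frac{1}{\lambda^2}=\frac{lc^2t^2}{6},
\end{equation}
which matches $A_{\pm1}\vol(N\backslash\H)=\frac{1}{2\pi^2}\cdot\frac{\pi^2}{3}\cdot lc^2$ via $\int_{-\infty}^\infty(\frac{x}{\sinh x})^2\,dx=\frac{\pi^2}{3}$. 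Checking that the error terms remain $o(t^2)$ uniformly and that the two endpoint signs give the same constant is the step demanding the most care.
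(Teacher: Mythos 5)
Your proof is correct, but note that the paper itself offers no proof of this proposition: it is quoted as a known result of Fan--Kim--Zeytuncu \cite{fan}, where, as that paper's title indicates, the argument is Tauberian --- one forms the heat trace of $\FS{\alpha}$ over the explicit spectrum, extracts its blow-up rate as the time parameter tends to $0$, and converts this into counting-function asymptotics by Karamata's theorem, the integrals defining $A_\alpha$ emerging as the natural limits of the rescaled trace. Your route is genuinely different and entirely self-contained: direct lattice-point counting from Proposition \ref{eigval}, where for fixed $\lambda$ the multiplicities $l\abs{n}$ sum to the quadratic main term $\frac{l}{2}M_\lambda^2$, and the resulting $\lambda$-series is identified with $\int_{-\infty}^{\infty}\frac{x}{\sinh x}e^{-\alpha x}\,dx$ via $\frac{1}{\sinh x}=2\sum_{\lambda\geq 0}e^{-(2\lambda+1)x}$. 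The individual ingredients all check out: the $n=0$ part is indeed $O(t)$; the error bookkeeping ($O(M_\lambda)$ per $\lambda$, totalling $O(t\log t)$, plus an $O(t)$ tail) is sound; the normalization $\vol(N\backslash\H)=lc^2$ follows exactly as you say from the Jacobian $c^{-2}$ of the dilation component; and your endpoint treatment --- excluding the infinite-dimensional kernel, reindexing both families to eigenvalues $\frac{\pi m\lambda}{c}$ with $m,\lambda\geq 1$, and using $\int_{-\infty}^{\infty}\kak{\frac{x}{\sinh x}}^2dx=\frac{\pi^2}{3}$ --- correctly explains why $A_{\pm 1}$ is not the limit of $A_\alpha$ as $\alpha\to\pm 1$. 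As for what each approach buys: yours is elementary and uses exactly the style of floor-function estimation that the paper itself deploys later in the proofs of Theorems \ref{weyllawB} and \ref{weyllawBpsi} (the sums $F_{\pm}$, $G_{\pm}$, $H_{\pm}$ there), so it would make the paper independent of \cite{fan} at the cost of about a page; the Tauberian route of \cite{fan} requires only trace asymptotics rather than term-by-term control of the counting function, and therefore generalizes more readily to situations where the spectrum is not completely explicit.
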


Combining Proposition \ref{weyllattice} with Theorem \ref{cor:dim}, we can get Weyl's law for the Heisenberg Bieberbach manifold $\Gamma_{2l,\pi}\backslash\H$.

\begin{thm} \label{weyllawB}
Let $N_{\Gamma_{2l,\pi}\backslash\H}(t)$ be the number of positive eigenvalues of $\FS{\alpha}$ on $\Gamma_{2l,\pi}\backslash\H$ which are less than or equal to $t > 0$. For $-1 \leq \alpha \leq 1$, we have
\begin{equation}
\lim_{t\to\infty}\frac{N_{\Gamma_{2l,\pi}\backslash\H}(t)}{t^2}=A_\alpha\vol(\Gamma_{2l,\pi}\backslash\H),
\end{equation}
where $A_\alpha$ is the same constant as in the Proposition \ref{weyllattice}.
\end{thm}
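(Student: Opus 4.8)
The plan is to reduce the Weyl law on $\Gamma_{2l,\pi}\backslash\H$ to the already known one on $N_{2l}\backslash\H$ (Proposition \ref{weyllattice}). Since $\mathscr{L}_\alpha$ is invariant under the unitary automorphism and the left translation that make up $\varphi$, it commutes with $\varphi^*$; hence functions on $\Gamma_{2l,\pi}\backslash\H$ are exactly the $\varphi$-invariant functions on $N_{2l}\backslash\H$, and each $\mathscr{L}_\alpha$-eigenspace on $\Gamma_{2l,\pi}\backslash\H$ is the $\varphi$-invariant part of the corresponding eigenspace on $N_{2l}\backslash\H$. First I would split both counting functions according to Corollary \ref{coro:LNH} into the $n=0$ summand and the $n\neq 0$ summands. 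For the $n=0$ part the eigenvalues are $\pi^2(\mu^2+\nu^2)$ with $(\mu,\nu)\in\Lambda'_{N_{2l}}$, so the number of them at most $t$ is the number of points of $\Lambda'_{N_{2l}}$ in a disc of radius $\sqrt{t}/\pi$, namely $O(t)=o(t^2)$; this holds on both manifolds, so the $n=0$ part is irrelevant to the leading asymptotics.

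The heart of the argument is the $n\neq 0$ part. By Proposition \ref{eigval} the eigenvalue $\frac{\pi\abs{n}}{2}(2\lambda+1-\alpha\sgn n)$ has multiplicity $2l\abs{n}$ on $N_{2l}\backslash\H$, while Theorem \ref{cor:dim} gives $\dim\mathscr{H}^\varphi_{n,\lambda}=l\abs{n}\pm1=\tfrac{1}{2}(2l\abs{n})\pm1$ on $\Gamma_{2l,\pi}\backslash\H$. Summing over the pairs $(n,\lambda)$ with $0<\frac{\pi\abs{n}}{2}(2\lambda+1-\alpha\sgn n)\leq t$, the leading terms combine to exactly half and the accumulated corrections are bounded by the number $P(t)$ of such pairs, so that
\begin{equation*}
\abs{N_{\Gamma_{2l,\pi}\backslash\H}(t)-\tfrac{1}{2}N_{N_{2l}\backslash\H}(t)}\leq P(t)+O(t),
\end{equation*}
where the $O(t)$ absorbs the two $n=0$ contributions. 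The key estimate is $P(t)=O(t\log t)$: for $-1\leq\alpha\leq1$ one has $2\lambda\leq 2\lambda+1-\alpha\sgn n\leq 2\lambda+2$, so the (positive) eigenvalue is comparable to $\abs{n}(\lambda+1)$, the admissible pairs are essentially $\set{(n,\lambda):\abs{n}(\lambda+1)\lesssim t}$, and counting these gives $\sum_{1\leq\abs{n}\lesssim t}O(t/\abs{n})=O(t\log t)$, a divisor-type bound. Hence $N_{\Gamma_{2l,\pi}\backslash\H}(t)=\tfrac{1}{2}N_{N_{2l}\backslash\H}(t)+o(t^2)$.

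Finally I would divide by $t^2$ and let $t\to\infty$. Proposition \ref{weyllattice} applied to $N_{2l}$ yields $\tfrac{1}{2}A_\alpha\vol(N_{2l}\backslash\H)$ for the limit. Since $\varphi$ acts freely on $N_{2l}\backslash\H$ (by torsion-freeness of $\Gamma_{2l,\pi}$) and $N_{2l}$ has index $2$ in $\Gamma_{2l,\pi}$, the covering $N_{2l}\backslash\H\to\Gamma_{2l,\pi}\backslash\H$ is two-sheeted, so $\vol(N_{2l}\backslash\H)=2\vol(\Gamma_{2l,\pi}\backslash\H)$ and the factor $\tfrac{1}{2}$ cancels, giving the stated formula. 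I expect the only genuine work to be the error estimate $P(t)=O(t\log t)$: one must verify that the accumulated $\pm1$ corrections together with the $n=0$ terms are truly $o(t^2)$, and everything else is bookkeeping once Theorem \ref{cor:dim} is in hand.
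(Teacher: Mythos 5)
Your proposal is correct and follows essentially the same route as the paper: identify functions on $\Gamma_{2l,\pi}\backslash\H$ with $\varphi$-invariant functions on $N_{2l}\backslash\H$, dispose of the toral eigenvalues $\pi^2(\mu^2+\nu^2)$ as $o(t^2)$, use Theorem \ref{cor:dim} to compare the multiplicities $l\abs{n}\pm 1$ against $\tfrac{1}{2}\cdot 2l\abs{n}$, and relate the volumes through the double cover. The only difference is the error estimate for the accumulated $\pm 1$ corrections: you bound them crudely by the number of admissible pairs, $P(t)=O(t\log t)$, whereas the paper exploits the alternation of the parity of $\abs{n}+\lambda$ in $n$ for fixed $\lambda$ to get the sharper bound $2\left(\tfrac{t}{\pi}+1\right)=O(t)$; both are $o(t^2)$, so your argument is complete.
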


\begin{proof}
 Since one can easily see that the set
 \begin{equation}
 \Set{(p,q,s) \in \H | 0 \leq p < 1, 0 \leq q < 2l, \frac{q}{2} \leq s < \frac{q + 1}{2}}
\end{equation}
is a fundamental domain for $\Gamma_{2l,\pi}$,
 we see that $\vol(\Gamma_{2l,\pi}\backslash\H)$ is half of $\vol(N_{2l}\backslash\H)$. Hence it suffices to show that
 \begin{equation}
 \left| N_{\Gamma_{2l,\pi}\backslash\H}(t) - \frac{1}{2} N_{N_{2l}\backslash\H}(t) \right| = o(t^2) \quad \text{as}\ t \rightarrow \infty.
 \end{equation}

Following Strichartz \cite{strichartz}, we write
\begin{equation}
N_M (t) = N_M^{(a)} (t) + N_M^{(b)} (t),
\end{equation}
where
$N_M^{(a)} (t)$ is the number of positive eigenvalues $\FS{\alpha}$ on $M$ which are of the  form $\frac{\pi\abs{n}}{2}(2\lambda +1 - \alpha \sgn n)$ and less than or equal to $t$, and $N_M^{(b)} (t)$ is the number of positive eigenvalues $\FS{\alpha}$ on $M$ which are of the  form $\pi^2(\mu^2+\nu^2)$ and less than or equal to $t$,
for $M = N_{2l}\backslash\H$ or $\Gamma_{2l,\pi}\backslash\H$.
By Weyl's law for the standard Laplacian on the torus $\R^2/\Lambda_{N_{2l}}$, we have $N_M^{(b)}(t) = o(t^2)$. Hence we only need to show that
 \begin{equation}
 \left| N_{\Gamma_{2l,\pi}\backslash\H}^{(a)} (t) - \frac{1}{2} N_{N_{2l}\backslash\H}^{(a)} (t) \right| = o(t^2) \quad \text{as}\ t \rightarrow \infty.
 \end{equation}

By Proposition \ref{eigval}, for the Heisenberg manifold $N_2\backslash\H$, the multiplicity of
$
\frac{\pi\abs{n}}{2}(2\lambda + 1 - \alpha \sgn n)
$
is $2l\abs{n}$, and so we get
\begin{equation}
N_{N_{2l}\backslash\H}^{(a)} (t) = \sideset{}{^{t}}{\sum}_{n,\lambda} 2l \abs{n}, \label{N^aN_2}
\end{equation}
where the sum $\sideset{}{^{t}}{\sum}_{n,\lambda}$ is taken over all $n \in \Z \setminus \set{0}$ and $\lambda \in \Z_{\geq 0}$ such that
\begin{equation}
\frac{\pi \abs{n}}{2}(2\lambda +1- \alpha \sgn n) \leq t.
\end{equation}
On the other hand, by Theorem \ref{cor:dim}, we have
\begin{equation}
N_{\Gamma_{2l,\pi}\backslash\H}^{(a)} (t) = \sum_{(n,\lambda) \in \mathcal{E} (t)} (l\abs{n} + 1) +
\sum_{(n,\lambda) \in \mathcal{O} (t)}
(l\abs{n} - 1). \label{N^aGamma}
\end{equation}
Here, we set
\begin{align*}
&\mathcal{E}(t) = \Set{(n,\lambda) \in \Z \setminus \set{0} \times \Z_{\geq 0} | \frac{\pi \abs{n}}{2}(2\lambda + 1 - \alpha \sgn n) \leq t,\ \abs{n} + \lambda\ \text{is even}}, \\
&\mathcal{O} (t) = \Set{(n,\lambda) \in \Z \setminus \set{0} \times \Z_{\geq 0} | \frac{\pi \abs{n}}{2}(2\lambda + 1 - \alpha \sgn n) \leq t,\ \abs{n} + \lambda\ \text{is odd}}.
\end{align*}
It follows from \eqref{N^aN_2} and \eqref{N^aGamma} that
\begin{equation}
\left| N_{\Gamma_{2l,\pi}\backslash\H}^{(a)} (t) - \frac{1}{2} N_{N_{2l}\backslash\H}^{(a)} (t) \right|
= \abs{ \# \mathcal{E} (t) - \# \mathcal{O} (t)}. \label{N^a-N^a}
\end{equation}
For fixed $\lambda \in \Z_{\geq 0}$, since numbers $n \in \Z$ such that $\abs{n} + \lambda$ is even and numbers $n \in \Z$ such that $\abs{n} + \lambda$ is odd appear alternately, the difference between the number of
\begin{align*}
\Set{n \in \Z \setminus \set{0} | (n,\lambda) \in \mathcal{E} (t)}
\end{align*}
and the number of
\begin{align*}
\Set{n \in \Z \setminus \set{0} | (n,\lambda) \in \mathcal{O} (t)}
\end{align*}
is at most 2 for all $t > 0$. On the other hand, if $\frac{\pi \abs{n}}{2}(2\lambda + 1 - \alpha \sgn n) \leq t$, we have $\lambda \leq \frac{t}{\pi}$.
It follows that
\begin{equation}
\abs{\# \mathcal{E} (t) - \# \mathcal{O} (t)} \leq 2 \kak{\frac{t}{\pi} +1}. \label{N^e-N^o}
\end{equation}
Combining \eqref{N^a-N^a} with \eqref{N^e-N^o}, the proof is complete.
\end{proof}

\section{Eigenvalues and eigenfunctions of $\mathscr{L}_\alpha$ on $\Gamma'_{2l,\frac{\pi}{2}} \backslash\mathbb{H}$} \label{sectionGammaprime}

Recall from Example \ref{example} (2) that
\begin{equation}
\Gamma'_{2l,\frac{\pi}{2}}=\langle N'_{2l},\psi\rangle,\quad\psi=\kak{0,0,\frac{1}{4}}U_{0,1}
\end{equation}
and $S_{2l}(N'_{2l})=N_{2l}$, where $S_{2l}$ is the symplectic map
\begin{equation}
(p,q,s) \mapsto \kak{\frac{p}{\sqrt{2l}},\sqrt{2l}q,s}.
\end{equation}

As in the previous section, we want to find $\psi$-invariant eigenfunctions of $\FS{\alpha}$ on $L^2(N'_{2l}\backslash\H)$. By Corollary \ref{coro:LNH}, we have the decomposition
\begin{equation}
L^2(N'_{2l}\backslash\H)=\bigoplus_{(\mu,\nu)\in\Lambda'_{N'_{2l}}}\C\chi_{\mu,\nu}\oplus\bigoplus_{n\neq0,a\in\Z/\abs{n}\Z,b\in\Z/2l\Z}\widetilde{\mathscr{H}}_n^{a,b},
\end{equation}
where  $\widetilde{\mathscr{H}}^{a,b}_n=S^*_{2l}(\mathscr{H}^{a,b}_n)$ and $\mathscr{H}_n=\set{f\in L^2(N_{2l}\backslash\H)\mid f(p,q,s)=e^{2\pi ins}g(p,q)}$.
Moreover, the spectrum of $\mathscr{L}_\alpha$ on $N'_{2l}\backslash\H$ is
\begin{equation}
\Set{\frac{\pi\abs{n}}{2}(2\lambda +1 - \alpha \sgn n) | \lambda\in\Z_{\geq 0},n \in \Z \setminus \set{0}}\cup\set{\pi^2(\mu^2+\nu^2) | (\mu,\nu)\in\Lambda'_{N'_{2l}}}.
\end{equation}
We remark that the symplectic map $S_{2l}$ does not preserve the Folland--Stein operator $\FS{\alpha}$.
Hence $W^{a,b}_n F_{n,\lambda}$, which is an eigenfunction of $\FS{\alpha}$ on $N_{2l} \backslash \H$, does not
pull back to that of $\FS{\alpha}$ on $N'_{2l} \backslash \H$.
Instead, the following proposition holds.

\begin{prop}\label{eigfctN'2l}
Let
\begin{equation}
F_{n,\lambda,l}(x)=H_\lambda(2\sqrt{l\pi\abs{n}}x)e^{-2l\pi\abs{n}x^2}. \label{F_n,lambda,l}
\end{equation}
The function $(W^{a,b}_n F_{n,\lambda,l}) \circ S_{2l} \in \widetilde{\mathscr{H}}^{a,b}_n$ is an eigenfunction of $\FS{\alpha}$ on $N'_{2l}\backslash \H$ with the eigenvalue $\frac{\pi\abs{n}}{2}(2\lambda+1-\alpha\sgn n)$.
\end{prop}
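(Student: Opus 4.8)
The claim that $(W_n^{a,b}F_{n,\lambda,l})\circ S_{2l}$ lies in $\widetilde{\mathscr{H}}_n^{a,b}$ is immediate, since $\widetilde{\mathscr{H}}_n^{a,b}=S_{2l}^*(\mathscr{H}_n^{a,b})$, the map $S_{2l}$ is symplectic (so $\abs{dS_{2l}}=1$ and $S_{2l}^*F=F\circ S_{2l}$), and $W_n^{a,b}F_{n,\lambda,l}\in\mathscr{H}_n^{a,b}$. The content is the eigenvalue assertion. As the text stresses, $S_{2l}$ does \emph{not} intertwine $\FS{\alpha}$ with itself, so Folland's eigenfunction $W_n^{a,b}F_{n,\lambda}$ does not pull back to an eigenfunction. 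The plan is therefore to pull back the \emph{operator} rather than the function: I will determine the left-invariant operator $\FS{\alpha}^{S_{2l}}$ on $N_{2l}\backslash\H$ characterized by $\FS{\alpha}(F\circ S_{2l})=(\FS{\alpha}^{S_{2l}}F)\circ S_{2l}$, diagonalize it inside $\mathscr{H}_n^{a,b}$ via the Weil--Brezin transform, and push the eigenfunctions back down through $S_{2l}$.

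First I would compute $\FS{\alpha}^{S_{2l}}$ by the chain rule. Using $S_{2l}(p,q,s)=\kak{p/\sqrt{2l},\sqrt{2l}q,s}$ one finds that the left-invariant fields transform homogeneously:
\begin{equation}
P(F\circ S_{2l})=\tfrac{1}{\sqrt{2l}}(PF)\circ S_{2l},\quad Q(F\circ S_{2l})=\sqrt{2l}\,(QF)\circ S_{2l},\quad S(F\circ S_{2l})=(SF)\circ S_{2l}.
\end{equation}
The only subtle case is $Q=\frac{\partial}{\partial q}+p\frac{\partial}{\partial s}$: the explicit factor $p$ becomes $\sqrt{2l}$ times the target first coordinate, which recombines exactly with $\frac{\partial}{\partial s}$ so that $Q$ scales by $\sqrt{2l}$ rather than producing variable coefficients. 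Substituting into \eqref{fs} gives
\begin{equation}
\FS{\alpha}^{S_{2l}}=\frac{1}{4}\kak{-\kak{\frac{1}{2l}P^2+2lQ^2}+i\alpha S},
\end{equation}
again a polynomial in $P,Q,S$, i.e. an element of the enveloping algebra.

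Because $\FS{\alpha}^{S_{2l}}$ is left-invariant, the intertwining relation of Proposition \ref{weilbrezin} differentiates to $\FS{\alpha}^{S_{2l}}\circ W_n^{a,b}=W_n^{a,b}\circ d\pi_n(\FS{\alpha}^{S_{2l}})$, just as in the computation of $\FS{\alpha}|_{\widetilde{\mathscr{H}}_n^{a,b}}$ in the previous section. Plugging in $d\pi_n(P)=\frac{d}{dx}$, $d\pi_n(Q)=2\pi in\,x$, $d\pi_n(S)=2\pi in$ yields the one-dimensional Schrödinger operator
\begin{equation}
d\pi_n(\FS{\alpha}^{S_{2l}})=-\frac{1}{8l}\frac{d^2}{dx^2}+2l\pi^2n^2x^2-\frac{\pi\alpha n}{2}.
\end{equation}
This is a harmonic oscillator $-A\frac{d^2}{dx^2}+Bx^2$ with $A=\frac{1}{8l}$, $B=2l\pi^2n^2$, shifted by $-\frac{\pi\alpha n}{2}$. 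Rescaling to the standard Hamiltonian $y^2-\frac{d^2}{dy^2}$ of \eqref{Hermite_fct}, the frequency is $\sqrt{AB}=\frac{\pi\abs{n}}{2}$, so the eigenvalues are $\frac{\pi\abs{n}}{2}(2\lambda+1)-\frac{\pi\alpha n}{2}=\frac{\pi\abs{n}}{2}(2\lambda+1-\alpha\sgn n)$, with eigenfunctions $H_\lambda\kak{(B/A)^{1/4}x}e^{-\frac{1}{2}(B/A)^{1/2}x^2}$. Since $(B/A)^{1/4}=2\sqrt{l\pi\abs{n}}$ and $\frac{1}{2}(B/A)^{1/2}=2l\pi\abs{n}$, this is precisely $F_{n,\lambda,l}$ of \eqref{F_n,lambda,l}.

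It then follows that $W_n^{a,b}F_{n,\lambda,l}$ is an eigenfunction of $\FS{\alpha}^{S_{2l}}$ on $\mathscr{H}_n^{a,b}$ with eigenvalue $\frac{\pi\abs{n}}{2}(2\lambda+1-\alpha\sgn n)$, and the defining relation of $\FS{\alpha}^{S_{2l}}$ shows that its pullback $(W_n^{a,b}F_{n,\lambda,l})\circ S_{2l}$ is an eigenfunction of $\FS{\alpha}$ on $N'_{2l}\backslash\H$ with the same eigenvalue. I expect the only delicate point to be the chain-rule computation of $\FS{\alpha}^{S_{2l}}$, specifically tracking the inhomogeneous term $p\frac{\partial}{\partial s}$ in $Q$ and verifying that it reassembles into a clean left-invariant operator; once $\FS{\alpha}^{S_{2l}}$ is in hand, the remainder is the standard harmonic-oscillator rescaling that pins down the exact constants $2\sqrt{l\pi\abs{n}}$ and $2l\pi\abs{n}$ in $F_{n,\lambda,l}$.
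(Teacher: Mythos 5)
Your proposal is correct and takes essentially the same route as the paper's proof: the paper likewise defines the conjugated operator $\widetilde{\FS{\alpha}}\colon F\mapsto \FS{\alpha}(F\circ S_{2l})\circ S_{2l}^{-1}$ on $L^2(N_{2l}\backslash\H)$, computes $d\pi_n(\widetilde{\FS{\alpha}})=2l\pi^2n^2x^2-\frac{1}{8l}\frac{d^2}{dx^2}-\frac{\pi n}{2}\alpha$, and rescales via $y=2\sqrt{l\pi\abs{n}}\,x$ to identify the harmonic oscillator whose Hermite eigenfunctions are exactly $F_{n,\lambda,l}$ with eigenvalue $\frac{\pi\abs{n}}{2}(2\lambda+1-\alpha\sgn n)$. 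Your chain-rule bookkeeping (with $P$, $Q$, $S$ scaling by $\tfrac{1}{\sqrt{2l}}$, $\sqrt{2l}$, $1$ respectively) leads to the same operator $d\pi_n(\widetilde{\FS{\alpha}})$ as in the paper, so the argument is complete as proposed.
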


\begin{proof}
The eigenfunctions of $\FS{\alpha}$ on $L^2(N'_{2l}\backslash\H)$ are of the form $F \circ S_{2l}$, where $F$ is an eigenfunction of the operator
\begin{equation}
\widetilde{\FS{\alpha}}\colon F\mapsto \FS{\alpha}(F\circ S_{2l})\circ S_{2l}^{-1}
\end{equation}
on $L^2(N_{2l}\backslash\H)$.
By a direct calculation, we have
\begin{equation}
\widetilde{\FS{\alpha}}=\frac{1}{4}(-(\widetilde{P}^2+\widetilde{Q}^2)+i\alpha S),
\end{equation}
where
\begin{equation}
\widetilde{P}=\frac{1}{\sqrt{2l}}\frac{\partial}{\partial p},\quad\widetilde{Q}=\sqrt{2l}Q+pS.
\end{equation}
Hence, one can easily see that
\begin{equation}
d \pi_n ( \widetilde{\FS{\alpha}} ) = 2 l \pi^2 n^2 x^2 - \frac{1}{8l} \frac{d^2}{dx^2} - \frac{\pi n}{2} \alpha.
\end{equation}
Under the change of variable $y = 2 \sqrt{l \pi \abs{n}}x$, the operator $d \pi_n (\widetilde{\FS{\alpha}})$ becomes
\begin{equation}
\frac{\pi \abs{n}}{2} \kak{y^2 - \frac{d^2}{dy^2}} - \frac{\pi n}{2} \alpha.
\end{equation}
Since the Hermite function $H_\lambda (y) e^{- \frac{y^2}{2}}$ is an eigenfunction of this operator with the eigenvalue $\frac{\pi\abs{n}}{2}(2\lambda+1-\alpha\sgn n)$, the function $W^{a,b}_n F_{n,\lambda,l} \in \mathscr{H}^{a,b}_n$ is an eigenfunction of $\widetilde{\FS{\alpha}}$ on $L^2(N_{2l}\backslash \H)$ with the same eigenvalue. This completes the proof.
\end{proof}

We write $\widetilde{W}^{a,b}_n F_{n,\lambda,l} = (W^{a,b}_n F_{n,\lambda,l}) \circ S_{2l}$. By Proposition \ref{eigfctN'2l}, an eigenfunction
\begin{equation}
\widetilde{G}_{n,\lambda,l} \in \widetilde{\mathscr{H}}_n = \bigoplus_{a\in\Z/\abs{n}\Z,b\in\Z/2l\Z}\widetilde{\mathscr{H}}_n^{a,b}
\end{equation}
with the eigenvalue $\frac{\pi\abs{n}}{2}(2\lambda+1-\alpha\sgn n)$ can be uniquely written as
\begin{equation}
\widetilde{G}_{n,\lambda,l}=\sum_{a\in\Z/\abs{n}\Z,b\in\Z/2l\Z} c_{n,\lambda,l}^{a,b} \widetilde{W}_n^{a,b} F_{n,\lambda,l} \label{WBdecomp2l}
\end{equation}
for some $c_{n,\lambda,l}^{a,b}\in\C$. We describe the function $\psi^*(\tilde{W}_n^{a,b}F_{n,\lambda,l})$ in concrete form in order to determine $\psi$-invariant eigenfunctions $\widetilde{G}_{n,\lambda,l}$ .
\begin{prop}\label{psiWF}
Let $n \neq 0$ and $\lambda \in \Z_{\geq 0}$, and we define $F_{n,\lambda,l}$ by \eqref{F_n,lambda,l}.
Then,
\begin{equation}
\psi^*(\widetilde{W}^{j,u}_n F_{n,\lambda,l}) = \frac{1}{\sqrt{2ln}} e^{\frac{\pi}{2} i (n + \lambda)}
\sum_{\substack{0 \leq j' \leq n-1 \\ 0 \leq u' \leq 2l - 1}} e^{- 4 l n \pi i \kak{\frac{j'}{n} + \frac{u'}{2ln}}\kak{\frac{j}{n} + \frac{u}{2ln}}} \widetilde{W}^{j',u'}_n F_{n,\lambda,l}
\end{equation}
for all $0 \leq j \leq n - 1,\ 0 \leq u \leq 2l-1$ if $n > 0$, and
\begin{equation}
\psi^*(\widetilde{W}^{j,u}_n F_{n,\lambda,l}) = \frac{1}{\sqrt{2 l \abs{n}}} e^{\frac{\pi}{2} i (n + 3 \lambda)}
\sum_{\substack{0 \leq j' \leq \abs{n} - 1 \\ 0 \leq u' \leq 2 l - 1}} e^{- 4 l n \pi i \kak{\frac{j'}{\abs{n}} + \frac{u'}{2ln}}\kak{\frac{j}{\abs{n}} + \frac{u}{2ln}}} \widetilde{W}^{j',u'}_n F_{n,\lambda,l}
\end{equation}
for all $0 \leq j \leq \abs{n} - 1,\ 0 \leq u \leq 2l-1$ if $n < 0$.
\end{prop}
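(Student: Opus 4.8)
The plan is to follow the direct strategy of Proposition~\ref{phiW^ab}, while confronting the essential new feature that $U_{0,1}$ is a quarter-turn rather than a half-turn. First I would record
\[
\psi(p,q,s)=\kak{-q,\,p,\,s-pq+\tfrac14},
\]
and, unfolding $\widetilde W^{j,u}_n F_{n,\lambda,l}=(W^{j,u}_n F_{n,\lambda,l})\circ S_{2l}$ together with the Weil--Brezin sum \eqref{WB} (with $2l$ in place of $l$), obtain
\[
\psi^*(\widetilde W^{j,u}_n F_{n,\lambda,l})(p,q,s)=(W^{j,u}_n F_{n,\lambda,l})\kak{-\tfrac{q}{\sqrt{2l}},\,\sqrt{2l}\,p,\,s-pq+\tfrac14}.
\]
The prefactor $e^{2\pi ins}$ in \eqref{WB} at once yields the central phase $e^{2\pi in/4}=e^{\frac{\pi}{2}in}$ together with a cross term $e^{-2\pi inpq}$, while the substitution moves the argument of the Hermite profile to $-q/\sqrt{2l}$ and pushes $p$ into the exponential. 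This is the crucial contrast with the half-turn case: whereas $U_{-1,0}$ merely negated the two variables and hence permuted the summands of \eqref{WB} up to a sign, $U_{0,1}$ interchanges the position and frequency variables, so the image is no longer a single Weil--Brezin vector and must instead be re-expanded as a superposition over all $(j',u')$.

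The heart of the proof is to recognize this interchange as a Fourier transform and to perform the re-expansion. Conceptually the intertwining relation of Proposition~\ref{weilbrezin} and the Stone--von Neumann theorem identify $\psi^*$ on $\widetilde{\mathscr{H}}_n$ with the scalar $e^{\frac{\pi}{2}in}$ times the metaplectic operator of the symplectic map $U_{0,1}$, which for a quarter-turn is, up to normalization, the Fourier transform on $L^2(\R)$. Two ingredients then pin down the formula. First, the rescaling built into $F_{n,\lambda,l}$ in \eqref{F_n,lambda,l} is exactly the one (arranged in Proposition~\ref{eigfctN'2l}) that turns it, after the $S_{2l}$-conjugation, into an eigenfunction of the governing Fourier transform with eigenvalue $(\mp i)^\lambda=e^{\mp\frac{\pi}{2}i\lambda}$; this supplies the phase $e^{\frac{\pi}{2}i\lambda}$ when $n>0$ and its conjugate $e^{\frac{3\pi}{2}i\lambda}=e^{-\frac{\pi}{2}i\lambda}$ when $n<0$. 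Second, applying the Poisson summation formula to the now-frequency-periodized sum converts it back into a position-periodized sum over the dual lattice; writing $\theta=\tfrac{j}{n}+\tfrac{u}{2ln}$ and $\theta'=\tfrac{j'}{n}+\tfrac{u'}{2ln}$, both indices run over the $2ln$ values $\theta=\tfrac{m}{2ln}$, and the kernel produced is precisely $e^{-4ln\pi i\,\theta'\theta}=e^{-2\pi i mm'/(2ln)}$, the unitary discrete Fourier transform of size $2ln$ with normalization $\tfrac{1}{\sqrt{2ln}}$. Together these account for every factor in the asserted identity.

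The hard part will be the bookkeeping inside the Poisson-summation step: one must follow how the summation index $k$ and the residues $j\bmod n$, $u\bmod 2l$ of the original sum reorganize after the transform into the new residues $j'\bmod n$, $u'\bmod 2l$, so that the continuous transform collapses into exactly one clean size-$2ln$ Gauss--Fourier kernel with no surviving cross terms. A second point demanding care is the sign of $n$: since the center acts through $\pi_n$ with central character of sign $\sgn n$, the quarter-turn is realized by the Fourier transform for $n>0$ and by its inverse for $n<0$, which is precisely what replaces the Hermite eigenvalue $i^\lambda$ by $i^{3\lambda}$ and substitutes $\abs n$ for $n$ in the index ranges. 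Once the case $n>0$ is settled, the case $n<0$ should follow verbatim with $\abs n$ in place of $n$ and the conjugate transform.
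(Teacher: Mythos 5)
Your proposal is correct and follows essentially the same route as the paper: the paper likewise unfolds $\psi^*(\widetilde{W}^{j,u}_n F_{n,\lambda,l})$ explicitly, expands it as a Fourier series in $q$ (its $\sqrt{2l}$-periodicity coming from $N'_{2l}$-invariance) --- which is exactly your Poisson-summation step --- and then uses the fact that the Fourier transform of $H_\lambda(x)e^{-x^2/2}$ is $e^{\frac{3}{2}\pi i \lambda}H_\lambda(\xi)e^{-\xi^2/2}$, the parity of $H_\lambda$, and the re-indexing $m = 2ln\kak{k'+\frac{j'}{\abs{n}}+\frac{u'}{2ln}}$ to produce precisely the size-$2ln$ kernel, normalization $\frac{1}{\sqrt{2l\abs{n}}}$, and phases you predict. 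Your metaplectic/Stone--von Neumann framing is extra motivation not present in the paper, but the computation you outline is the paper's proof.
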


\begin{proof}
Since
\begin{equation}
\psi(p,q,s)=\kak{-q,p,s+\frac{1}{4} - pq},
\end{equation}
it follows from the definition \eqref{WB} of the Weil--Brezin transform that
\begin{align}
\psi^*(\widetilde{W}_n^{j,u} F_{n,\lambda,l})(&p,q,s) \\
&=(\widetilde{W}^{j,u}_n F_{n,\lambda,l})\kak{-q,p,s+\frac{1}{4} - pq} \\
& = (W^{j,u}_n F_{n,\lambda,l}) \kak{- \frac{q}{\sqrt{2l}},\sqrt{2l} p, s + \frac{1}{4} - pq} \\
&=e^{2\pi i n s} e^{\frac{\pi}{2} i n} e^{- 2 \pi i n pq} \sum_{k\in\Z} F_{n,\lambda,l} \kak{- \frac{q}{\sqrt{2l}} + k + \frac{j}{\abs{n}}
+ \frac{u}{2 l n}} e^{2 \pi i n (k + \frac{j}{\abs{n}} + \frac{u}{2 l n}) \sqrt{2l} p}. \label{psiFn}
\end{align}
Since the lattice $N'_{2l}$ is a normal subgroup of $\Gamma'_{2l,\frac{\pi}{2}}$, the fact that that
$\widetilde{W}_n^{j,u} F_{n,\lambda,l} \in L^2(N'_{2l} \backslash \H)$ implies that $\psi^*(\widetilde{W}_n^{j,u} F_{n,\lambda,l}) \in L^2(N'_{2l} \backslash \H)$.
Consequently $\psi^*(\widetilde{W}_n^{j,u} F_{n,\lambda,l})$ has a period $\sqrt{2l}$ with respect to $q$. We want to expand $\psi^*(\widetilde{W}_n^{j,u} F_{n,\lambda,l})$ to the Fourier series with respect to $q$. To do this, we  calculate the Fourier coefficient
\begin{equation}
\mathscr{F}_m(p,s) = \frac{1}{\sqrt{2l}} \int^{\sqrt{\frac{l}{2}}}_{- \sqrt{\frac{l}{2}}} \psi^*(\widetilde{W}_n^{j,u} F_{n,\lambda,l})(p,q,s)
e^{- \frac{2 \pi i m}{\sqrt{2l}}q}\ dq
\end{equation}
for $m \in \Z$. By \eqref{psiFn}, we have
\begin{align*}
&\mathscr{F}_m(p,s)  \\
& = \frac{1}{\sqrt{2l}} e^{2\pi i n s} e^{\frac{\pi}{2} i n} \sum_{k \in \Z} \int^{\sqrt{\frac{l}{2}}}_{- \sqrt{\frac{l}{2}}}
F_{n,\lambda,l} \kak{- \frac{q}{\sqrt{2l}} + k + \frac{j}{\abs{n}}
+ \frac{u}{2 l n}} e^{2 \pi i n (-q + \sqrt{2l}(k + \frac{j}{\abs{n}} + \frac{u}{2 l n})) p}
e^{- \frac{2 \pi i m}{\sqrt{2l}}q}\ dq.
\end{align*}
Under the change of variable $Q = -q + \sqrt{2l} \kak{k + \frac{j}{\abs{n}} + \frac{u}{2ln}}$, the right-hand side can be written as
\begin{align*}
&\frac{1}{\sqrt{2l}} e^{2\pi i n s} e^{\frac{\pi}{2} i n} \sum_{k \in \Z} \int^{\sqrt{\frac{l}{2}} + \sqrt{2l} \kak{k + \frac{j}{\abs{n}} + \frac{u}{2ln}}}_{- \sqrt{\frac{l}{2}} + \sqrt{2l} \kak{k + \frac{j}{\abs{n}} + \frac{u}{2ln}}}
F_{n,\lambda,l} \kak{\frac{Q}{\sqrt{2l}}}
 e^{2 \pi i \kak{\frac{m}{\sqrt{2l}} + np} Q}
e^{- 2 \pi i m \kak{\frac{j}{\abs{n}} + \frac{u}{2ln}}}\ dQ \\
& = \frac{1}{\sqrt{2l}} e^{2\pi i n s} e^{\frac{\pi}{2} i n} e^{-2 \pi i m \kak{\frac{j}{\abs{n}} + \frac{u}{2ln}}}
\int^{\infty}_{-\infty}
F_{n,\lambda,l} \kak{\frac{Q}{\sqrt{2l}}}
 e^{2 \pi i \kak{\frac{m}{\sqrt{2l}} + np} Q}\ dQ.
\end{align*}
Since $F_{n,\lambda,l}(x)=H_\lambda(2\sqrt{l\pi\abs{n}} x)e^{-2l\pi\abs{n}x^2}$, we get
\begin{align*}
\mathscr{F}_m(p,s)
=\frac{1}{\sqrt{2l}} e^{2\pi i n s} e^{\frac{\pi}{2} i n} e^{-2 \pi i m \kak{\frac{j}{\abs{n}} + \frac{u}{2ln}}}
\int^{\infty}_{-\infty}
H_\lambda (\sqrt{2 \pi \abs{n}} Q) e^{-\pi \abs{n} Q^2}
 e^{2 \pi i \kak{\frac{m}{\sqrt{2l}} + np} Q}\ dQ.
\end{align*}
If we set  $x = \sqrt{2 \pi \abs{n}} Q$ and $\xi = -\sqrt{\frac{2 \pi}{\abs{n}}} \kak{\frac{m}{\sqrt{2l}} + np}$, the last integral is
\begin{equation}
\frac{1}{\sqrt{2 \pi \abs{n}}} \int^{\infty}_{-\infty}
H_\lambda (x) e^{-\frac{x^2}{2}}
 e^{-i \xi x}\ dx.
\end{equation}
It is known that the Fourier transform $\frac{1}{\sqrt{2 \pi}} \int^{\infty}_{-\infty}
H_\lambda (x) e^{-\frac{x^2}{2}} e^{-i \xi x}\ dx$ of the Hermite function $H_\lambda(x) e^{-\frac{x^2}{2}}$ is
$e^{\frac{3}{2} \pi i \lambda} H_\lambda(\xi) e^{-\frac{\xi^2}{2}}$. Hence it follows that
\begin{align*}
\mathscr{F}_m &(p,s) \\
& = \frac{1}{\sqrt{2 l \abs{n}}} e^{2\pi i n s} e^{\frac{\pi}{2} (n + 3 \lambda) i} e^{-2 \pi i m \kak{\frac{j}{\abs{n}} + \frac{u}{2ln}}} H_\lambda(\xi) e^{-\frac{\xi^2}{2}} \\
& = \frac{1}{\sqrt{2 l \abs{n}}} e^{2\pi i n s} e^{\frac{\pi}{2} (n + 3 \lambda) i} e^{-2 \pi i m \kak{\frac{j}{\abs{n}} + \frac{u}{2ln}}} H_\lambda \kak{-\sqrt{\frac{2 \pi}{\abs{n}}} \kak{\frac{m}{\sqrt{2l}} + np}} e^{-\frac{\pi}{\abs{n}} \kak{\frac{m}{\sqrt{2l}} + np}^2}.
\end{align*}
Note that every $m \in \Z$ is uniquely written as $m = 2 l n \kak{k' + \frac{j'}{\abs{n}} + \frac{u'}{2ln}}$
for some $k' \in \Z$, $0 \leq j' \leq \abs{n} - 1$ and $0 \leq u' \leq 2l - 1$.
Since $\frac{m}{\sqrt{2l}} + np = \sqrt{2 l} n \kak{\frac{p}{\sqrt{2 l}} + k' + \frac{j'}{\abs{n}} + \frac{u'}{2ln}}$, we have
\begin{align*}
 &H_\lambda \kak{-\sqrt{\frac{2 \pi}{\abs{n}}} \kak{\frac{m}{\sqrt{2l}} + np}} e^{-\frac{\pi}{\abs{n}} \kak{\frac{m}{\sqrt{2l}} + np}^2} \\
 & = H_\lambda \kak{-\sgn n\ 2 \sqrt{l \pi \abs{n}}\kak{\frac{p}{\sqrt{2 l}} + k' + \frac{j'}{\abs{n}} + \frac{u'}{2ln}}}
 e^{-2 l \pi \abs{n} \kak{\frac{p}{\sqrt{2 l}} + k' + \frac{j'}{\abs{n}} + \frac{u'}{2ln}}^2} \\
 & =
 \begin{cases}
	e^{i \lambda \pi} F_{n,\lambda,l} \kak{\frac{p}{\sqrt{2 l}} + k' + \frac{j'}{\abs{n}} + \frac{u'}{2ln}} & \text{if}\ n > 0, \\
	 F_{n,\lambda,l} \kak{\frac{p}{\sqrt{2 l}} + k' + \frac{j'}{\abs{n}} + \frac{u'}{2ln}} & \text{if}\ n < 0.
	\end{cases}
\end{align*}
(Here we used the fact that $H_\lambda (x)$ is an even (resp. odd) polynomial if $\lambda$ is even (resp. odd).) For $n > 0$, it follows that
\begin{equation}
\psi^*(\widetilde{W}_n^{j,u} F_{n,\lambda,l})(p,q,s) = \sum_{\substack{0 \leq j' \leq n-1 \\ 0 \leq u' \leq 2l - 1}}
\sum_{k' \in \Z} \mathscr{F}_{2 l n \kak{k' + \frac{j'}{\abs{n}} + \frac{u'}{2ln}}}(p,s)
e^{\frac{2 \pi i}{\sqrt{2l}} 2 l n \kak{k' + \frac{j'}{\abs{n}} + \frac{u'}{2ln}} q}
\end{equation}
\begin{align*}
 = \frac{1}{\sqrt{2 l \abs{n}}} e^{\frac{\pi}{2} (n + \lambda) i} & \sum_{\substack{0 \leq j' \leq n-1 \\ 0 \leq u' \leq 2l - 1}}
 e^{- 4 l n \pi i \kak{\frac{j'}{\abs{n}} + \frac{u'}{2ln}} \kak{\frac{j}{\abs{n}} + \frac{u}{2ln}}} \\
& \times e^{2\pi i n s} \sum_{k' \in \Z}
 F_{n,\lambda,l} \kak{\frac{p}{\sqrt{2 l}} + k' + \frac{j'}{\abs{n}} + \frac{u'}{2ln}}
 e^{2 \pi i n \kak{k' + \frac{j'}{\abs{n}} + \frac{u'}{2ln}} \sqrt{2l} q} ,
\end{align*}
and consequently
\begin{equation}
\psi^*(\widetilde{W}_n^{j,u} F_{n,\lambda,l}) = \frac{1}{\sqrt{2 l \abs{n}}} e^{\frac{\pi}{2} (n + \lambda) i} \sum_{\substack{0 \leq j' \leq n-1 \\ 0 \leq u' \leq 2l - 1}}
 e^{- 4 l n \pi i \kak{\frac{j'}{\abs{n}} + \frac{u'}{2ln}} \kak{\frac{j}{\abs{n}} + \frac{u}{2ln}}} \widetilde{W}^{j',u'}_n
 F_{n,\lambda,l}
\end{equation}
The proof for $n < 0$ is similar.
\end{proof}

\begin{cor}\label{cor:coefficient2l}
Let $\widetilde{G}_{n,\lambda,l} \in \widetilde{\mathscr{H}}_n$ be an eigenfunction of $\FS{\alpha}$ on $N'_{2l} \backslash \H$ with eigenvalue
$\frac{\pi \abs{n}}{2} (2 \lambda + 1 - \alpha \sgn n)$ which is written as \eqref{WBdecomp2l}
for $n \in \Z$ and $\lambda \in \Z_{\geq 0}$.
\begin{itemize}
\item[(i)] For $n > 0$, $\psi^* \widetilde{G}_{n,\lambda,l} = \widetilde{G}_{n,\lambda,l}$ if and only if
\begin{equation}
c^{j',u'}_{n,\lambda,l} = \frac{1}{\sqrt{2ln}} e^{\frac{\pi}{2} i (n + \lambda)}
\sum_{\substack{0 \leq j \leq n-1 \\ 0 \leq u \leq 2l - 1}} e^{- 4 l n \pi i \kak{\frac{j'}{n} + \frac{u'}{2ln}}\kak{\frac{j}{n} + \frac{u}{2ln}}} c^{j,u}_{n,\lambda,l} \label{eq:2l}
\end{equation}
for all $0 \leq j' \leq n - 1,\ 0 \leq u' \leq 2l-1$.
\item[(ii)] For $n < 0$, $\psi^* \widetilde{G}_{n,\lambda,l} = \widetilde{G}_{n,\lambda,l}$ if and only if
\begin{equation}
c^{j',u'}_{n,\lambda,l} = \frac{1}{\sqrt{2l\abs{n}}} e^{\frac{\pi}{2} i (n + 3 \lambda)}
\sum_{\substack{0 \leq j \leq \abs{n}-1 \\ 0 \leq u \leq 2l - 1}} e^{- 4 l n \pi i \kak{\frac{j'}{\abs{n}} + \frac{u'}{2ln}}\kak{\frac{j}{\abs{n}} + \frac{u}{2ln}}} c^{j,u}_{n,\lambda,l}
\end{equation}
for all $0 \leq j' \leq \abs{n} - 1,\ 0 \leq u' \leq 2l-1$.
\end{itemize}
\end{cor}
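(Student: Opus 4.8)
The plan is to apply the pullback $\psi^*$ directly to the Weil--Brezin expansion \eqref{WBdecomp2l} and then read off the resulting coefficients, since Proposition \ref{psiWF} already tells us exactly how $\psi^*$ acts on each basis function $\widetilde{W}^{j,u}_n F_{n,\lambda,l}$. Because $\psi^*$ is linear, I would first write, using representatives $0\leq j\leq\abs{n}-1$ and $0\leq u\leq 2l-1$ for the indices of \eqref{WBdecomp2l},
\begin{equation*}
\psi^* \widetilde{G}_{n,\lambda,l} = \sum_{\substack{0 \leq j \leq \abs{n}-1 \\ 0 \leq u \leq 2l-1}} c^{j,u}_{n,\lambda,l}\, \psi^*\kak{\widetilde{W}^{j,u}_n F_{n,\lambda,l}},
\end{equation*}
and then substitute the appropriate formula from Proposition \ref{psiWF} into each term.

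Concentrating on the case $n>0$, inserting the first formula of Proposition \ref{psiWF} turns the right-hand side into a double sum over $(j,u)$ and $(j',u')$. Interchanging the two finite summations and collecting the coefficient of each basis function $\widetilde{W}^{j',u'}_n F_{n,\lambda,l}$, I obtain
\begin{equation*}
\psi^* \widetilde{G}_{n,\lambda,l} = \sum_{\substack{0 \leq j' \leq n-1 \\ 0 \leq u' \leq 2l-1}} \kak{ \frac{1}{\sqrt{2ln}} e^{\frac{\pi}{2} i (n + \lambda)} \sum_{\substack{0 \leq j \leq n-1 \\ 0 \leq u \leq 2l-1}} e^{- 4 l n \pi i \kak{\frac{j'}{n} + \frac{u'}{2ln}}\kak{\frac{j}{n} + \frac{u}{2ln}}} c^{j,u}_{n,\lambda,l} } \widetilde{W}^{j',u'}_n F_{n,\lambda,l}.
\end{equation*}
The key point to keep straight is that the roles of the primed and unprimed indices are now reversed relative to Proposition \ref{psiWF}: the image index $(j',u')$ appearing there becomes the label of the basis function we read off here, so that the summation in the corollary runs over $(j,u)$ while $(j',u')$ becomes fixed.

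Since, by construction, $\widetilde{G}_{n,\lambda,l}$ is \emph{uniquely} expressible in the basis $\tkak{\widetilde{W}^{j',u'}_n F_{n,\lambda,l}}$, the identity $\psi^* \widetilde{G}_{n,\lambda,l} = \widetilde{G}_{n,\lambda,l}$ holds if and only if the coefficients of each basis function on the two sides agree; that is, precisely when $c^{j',u'}_{n,\lambda,l}$ equals the bracketed expression above for every $(j',u')$, which is exactly \eqref{eq:2l}. This settles (i), and for (ii) one repeats the argument verbatim using the second formula of Proposition \ref{psiWF}, the only changes being the phase $e^{\frac{\pi}{2}i(n+3\lambda)}$ and the ranges $0\leq j,j'\leq\abs{n}-1$ of the indices. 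The computation is entirely mechanical, so there is no genuine obstacle; the only points requiring care are the index bookkeeping in the swap of summations---remembering that the basis-function label becomes the fixed index while the former fixed index becomes the summation variable---and the appeal to the uniqueness of the Weil--Brezin expansion, which is what legitimises the termwise comparison of coefficients.
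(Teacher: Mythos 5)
Your proposal is correct and matches the paper's treatment: the paper states this corollary without a separate proof, treating it exactly as you do—as an immediate consequence of Proposition \ref{psiWF} via linearity of $\psi^*$ and comparison of coefficients in the unique expansion \eqref{WBdecomp2l} (note also that the exponential kernel is symmetric in $(j,u)\leftrightarrow(j',u')$, so the index swap you are careful about changes nothing in the final formula). No gaps.
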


By Corollary \ref{cor:coefficient2l}, we can compute the dimensions of eigenspaces.

\begin{thm} \label{dim2ln}
For $\lambda\in\Z_{\geq 0}$ and $n\neq 0$, let $\widetilde{\mathscr{H}}_{n,\lambda}^\psi$ be the subspace of $\frac{\pi \abs{n}}{2} (2 \lambda + 1 - \alpha \sgn n)$-eigenfunctions $\widetilde{G}_{n,\lambda,l}\in\widetilde{\mathscr{H}}_n$ such that $\psi^*\widetilde{G}_{n,\lambda,l}=\widetilde{G}_{n,\lambda,l}$.
\begin{itemize}
\item[(i)] If $\abs{n}$ is even or $l$ is even, we have
\begin{align*}
  \dim \widetilde{\mathscr{H}}_{n,\lambda}^\psi=
	\begin{dcases}
	\dfrac{l\abs{n}}{2}+1 & \text{if}\ \abs{n}+\lambda \equiv 0 \mod 4, \\
	\dfrac{l\abs{n}}{2} & \text{if}\ \abs{n}+\lambda \equiv 1,2 \mod 4, \\
	\dfrac{l\abs{n}}{2}-1 & \text{if}\ \abs{n}+\lambda \equiv 3 \mod 4.
	\end{dcases}
\end{align*}
\item[(ii)] If $\abs{n}$ is odd and $l$ is odd, we have
\begin{align*}
  \dim \widetilde{\mathscr{H}}_{n,\lambda}^\psi=
	\begin{dcases}
	\dfrac{l\abs{n}}{2}+\dfrac{1}{2} & \text{if}\ \abs{n}+\lambda \equiv 0,2 \mod 4, \\
	\dfrac{l\abs{n}}{2}-\dfrac{1}{2} & \text{if}\ \abs{n}+\lambda \equiv 1,3 \mod 4.
	\end{dcases}
\end{align*}
\end{itemize}
\end{thm}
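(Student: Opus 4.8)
The plan is to read Corollary \ref{cor:coefficient2l} as the statement that $\psi^*$ acts on the finite-dimensional eigenspace $\widetilde{\mathscr{H}}_{n,\lambda}$ (spanned by the $2l\abs{n}$ functions $\widetilde{W}^{j,u}_nF_{n,\lambda,l}$) by an explicit matrix $M$ on the coefficient vector $(c^{j,u}_{n,\lambda,l})$, and to compute $\dim\widetilde{\mathscr{H}}^\psi_{n,\lambda}$ as the multiplicity of the eigenvalue $1$ of $M$. The structural fact that makes this tractable is that $\psi^4=(0,0,1)\in N'_{2l}$ acts trivially on $L^2(N'_{2l}\backslash\H)$ (the center acts on $\widetilde{\mathscr{H}}_n$ by the scalar $e^{2\pi in}=1$), so $(\psi^*)^4=\Id$ on each eigenspace and $M$ is diagonalizable with eigenvalues in $\set{1,i,-1,-i}$. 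Averaging over the cyclic group generated by $\psi^*$ (the projector onto the $1$-eigenspace is $\tfrac14\sum_{k=0}^3(\psi^*)^k$) then gives
\begin{equation*}
\dim\widetilde{\mathscr{H}}^\psi_{n,\lambda}
=\mathrm{tr}\kak{\tfrac14\sum_{k=0}^{3}(\psi^*)^k}
=\tfrac14\sum_{k=0}^{3}\mathrm{tr}\kak{(\psi^*)^k},
\end{equation*}
so it suffices to evaluate the four traces $\mathrm{tr}\kak{(\psi^*)^k}$, $k=0,1,2,3$.

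The first step is to recognize $M$. Writing $N=2l\abs{n}$ and setting $v=2lj+u$ (for $n>0$), the pairs $(j,u)$ with $0\leq j\leq n-1$, $0\leq u\leq 2l-1$ are put in bijection with $\Z/N\Z$, and since $\frac{j}{\abs n}+\frac{u}{2ln}=\frac{v}{2ln}$ the phase $e^{-4ln\pi i(\cdots)(\cdots)}$ in Corollary \ref{cor:coefficient2l} collapses to $e^{-2\pi iv'v/N}$. Hence $M=i^{n+\lambda}U$, where $U=\frac{1}{\sqrt N}\kak{e^{-2\pi iv'v/N}}_{v',v}$ is the unitary discrete Fourier transform of order $N$; for $n<0$ the substitution $v=2lj-u\bmod N$ gives instead $M=i^{n+3\lambda}\overline U$ with $\overline U=U^{-1}$. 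Therefore $\mathrm{tr}\kak{(\psi^*)^k}=i^{k(n+\lambda)}\mathrm{tr}(U^k)$ (resp. its conjugate analogue), and $\mathrm{tr}(U^k)$ reduces to a quadratic Gauss sum: since $N$ is even one has $\mathrm{tr}(U^2)=\#\set{v:2v\equiv0\bmod N}=2$, while $\frac1{\sqrt N}\sum_v e^{-2\pi iv^2/N}$ gives $\mathrm{tr}(U)=1-i$ when $N\equiv0\bmod4$ and $\mathrm{tr}(U)=0$ when $N\equiv2\bmod4$, with $\mathrm{tr}(U^3)=\overline{\mathrm{tr}(U)}$.

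Finally I would assemble the counts. Because $N=2l\abs{n}$ is always even, case (i) ($\abs n$ even or $l$ even) is exactly $N\equiv0\bmod4$ and case (ii) ($\abs n,l$ both odd) is exactly $N\equiv2\bmod4$. Substituting the Gauss-sum values into the averaging formula shows that the multiplicity of the eigenvalue $1$ of $M$ equals the multiplicity of the eigenvalue $i^{-(\abs n+\lambda)}$ of $U$; for $n>0$ this is immediate, and for $n<0$ it follows from $\overline U=U^{-1}$ together with the congruence $n+3\lambda\equiv-(\abs n+\lambda)\bmod4$, so the two sign cases produce the same answer and the result depends only on $(\abs n+\lambda)\bmod4$. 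A short table in $N/4$ then yields the stated dimensions: for $N\equiv0$ the multiplicities of the eigenvalues $(1,i,-1,-i)$ of $U$ are $\kak{\tfrac N4+1,\tfrac N4-1,\tfrac N4,\tfrac N4}$, giving part (i), and for $N\equiv2$ they are $\kak{\tfrac{N+2}4,\tfrac{N-2}4,\tfrac{N+2}4,\tfrac{N-2}4}$, giving part (ii). I expect the main obstacle to be bookkeeping rather than any deep idea: one must evaluate the Gauss sums correctly by the residue of $N$ modulo $4$, and above all track the scalar phases $i^{n+\lambda}$ versus $i^{n+3\lambda}$ and the conjugation $\overline U=U^{-1}$ carefully enough to see that the $n>0$ and $n<0$ computations collapse to the single quantity $(\abs n+\lambda)\bmod4$.
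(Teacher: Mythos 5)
Your proposal is correct and takes essentially the same route as the paper: the paper's proof likewise realizes $\psi^*$ as a representation of $\Gamma'_{2l,\frac{\pi}{2}}/N'_{2l}\simeq\Z/4\Z$ on $\widetilde{\mathscr{H}}_{n,\lambda}$, computes $\dim\widetilde{\mathscr{H}}^\psi_{n,\lambda}=\frac{1}{4}\sum_{m=0}^{3}\chi_\rho(\psi^m)$, evaluates $\chi_\rho(\psi)$ as the quadratic Gauss sum $\frac{1}{\sqrt{2ln}}e^{\frac{\pi}{2}i(n+\lambda)}\sum_{k=0}^{2ln-1}e^{-2\pi ik^2/(2ln)}$ split by $2ln \bmod 4$, and uses $\chi_\rho(\psi^3)=\overline{\chi_\rho(\psi)}$. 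Your only (cosmetic) departure is to recognize the matrix of $\psi^*$ as $i^{n+\lambda}$ times the discrete Fourier transform matrix, so that $\mathrm{tr}\bigl((\psi^*)^2\bigr)=2(-1)^{n+\lambda}$ falls out of the flip permutation $v\mapsto -v$ instead of the paper's case-by-case count of basis vectors fixed by $\varphi^*$ via Proposition 4.1 — a cleaner bookkeeping of the same computation.
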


\begin{proof}
Let
\begin{equation}
\widetilde{\mathscr{H}}_{n,\lambda} = \spn \Set{\widetilde{W}^{a,b}_n F_{n,\lambda,l}}_{a \in \Z /\abs{n} \Z,b \in \Z / 2l \Z}.
\end{equation}
We consider the representation $\rho$ of the group $\Gamma'_{2l,\frac{\pi}{2}} / N'_{2l} \simeq \langle \psi \rangle \simeq \Z / 4\Z$ on $\widetilde{\mathscr{H}}_{n,\lambda}$ defined by
\begin{equation}
\rho(\psi)(\widetilde{W}^{a,b}_n F_{n,\lambda,l}) = \psi^* \widetilde{W}^{a,b}_n F_{n,\lambda,l}.
\end{equation}
Then note that
\begin{equation}
 \dim \widetilde{\mathscr{H}}_{n,\lambda}^\psi = \frac{1}{4}\sum_{m=0}^3 \chi_\rho(\psi^m),
\end{equation}
where $\chi_\rho$ is the character of $\rho$.
Now it is easy to see that
\begin{equation}
\chi_\rho(\psi^0) = 2 l \abs{n},\  \chi_\rho(\psi^3) = \chi_\rho(\psi^{-1}) = \overline{\chi_\rho(\psi)}.
\end{equation}
Let us suppose that $n > 0$, for the other case can be treated similarly.
First, we show that
\begin{equation}
\chi_\rho(\psi^2) = \chi_\rho(\varphi) = 2 e^{\pi(n + \lambda) i}.
\end{equation}
By Proposition \ref{phiW^ab}, we have
\begin{equation}\label{eq:20}
\varphi^*(\widetilde{W}^{a,0}_n F_{n,\lambda,l})=e^{\pi i(n+\lambda)}\widetilde{W}^{-a,0}_n F_{n,\lambda,l}
\end{equation}
and
\begin{equation}\label{eq:21}
\varphi^*(\widetilde{W}^{a,b}_n F_{n,\lambda,l})=e^{\pi i(n+\lambda)}\widetilde{W}^{-a-1,-b}_n F_{n,\lambda,l},\quad b \neq 0.
\end{equation}
To compute $\chi(\varphi)$, it suffices to find $a,b$ such that $\varphi^*(\widetilde{W}^{a,b}_n F_{n,\lambda,l})=e^{\pi i(n+\lambda)}\widetilde{W}^{a,b}_n F_{n,\lambda,l}$.
If $n$ is even, then $-a \equiv a$ mod $n$ $\Longleftrightarrow$ $a \equiv 0,\frac{n}{2}$ mod $n$.
Hece it follows from \eqref{eq:20} that
\begin{equation}
\varphi^*(\widetilde{W}^{a,0}_n F_{n,\lambda,l})=e^{\pi i(n+\lambda)}\widetilde{W}^{a,0}_n F_{n,\lambda,l}
\Longleftrightarrow a \equiv 0,\frac{n}{2} \mod n.
\end{equation}
Moreover, since $-a-1 \not\equiv a$ mod $n$ for all $a$, we get
\begin{equation}
\varphi^*(\widetilde{W}^{a,b}_n F_{n,\lambda,l}) \neq e^{\pi i(n+\lambda)}\widetilde{W}^{a,b}_n F_{n,\lambda,l}
\end{equation}
for all $a$ and $b \neq 0$ by \eqref{eq:21}. Thus, we see that $\chi(\varphi) = 2e^{\pi(n+\lambda)}$.
If $n$ is odd, a similar argument shows that
\begin{multline}
\varphi^*(\widetilde{W}^{a,b}_n F_{n,\lambda,l})=e^{\pi i(n+\lambda)}\widetilde{W}^{a,b}_n F_{n,\lambda,l}
\\
\Longleftrightarrow
a \equiv \frac{n-1}{2}\mod n,\ b\equiv 0 \mod 2l \\
\text{or}\quad a \equiv \frac{n-1}{2}\mod n,\ b \equiv l \mod 2l,
\end{multline}
and hence $\chi(\varphi) = 2e^{\pi(n+\lambda)}$.

Next, we calculate $\chi_\rho(\psi)$. By Proposition \ref{psiWF}, we have
\begin{align*}
\chi_\rho(\psi) &= \frac{1}{\sqrt{2ln}} e^{\frac{\pi}{2} i (n +  \lambda)}
\sum_{\substack{0 \leq j \leq n-1 \\ 0 \leq u \leq 2l - 1}} e^{- 4 l n \pi i \kak{\frac{j}{n} + \frac{u}{2ln}}^2} \\
&= \frac{1}{\sqrt{2ln}} e^{\frac{\pi}{2} i (n +  \lambda)}
\sum_{k=0}^{2 l n -1} e^{- \frac{2 \pi i }{2 l n}k^2}.
\end{align*}
If we set $\zeta = e^{\frac{2 \pi i }{2 l n}}$, the sum
\begin{equation}
\sum_{k=0}^{2 l n -1} \zeta^{k^2}
\end{equation}
is called the Gauss sum and is known to be
\begin{align}\label{eqGauss}
  \sum_{k=0}^{2 l n -1} \zeta^{k^2}=
	\begin{cases}
	(1 + i) \sqrt{2ln} & 2ln \equiv 0 \mod 4, \\
	\sqrt{2ln} & 2ln \equiv 1 \mod 4, \\
	0 & 2ln \equiv 2 \mod 4, \\
	i \sqrt{2ln} & 2ln \equiv 3 \mod 4.
	\end{cases}
\end{align}
However, we remark that it can only happen that $2ln \equiv 0$ mod 4 and $2ln \equiv 2$ mod 4.

Suppose that $2ln \equiv 0$ mod 4, which is equivalent to that $n$ is even or $l$ is even.
If we moreover assume that $n + \lambda \equiv 0$ mod 4, we have
\begin{equation}
\chi_\rho(\psi)=\frac{1}{\sqrt{2ln}} e^{\frac{\pi}{2} i (n +  \lambda)} \sum_{k=0}^{2 l n -1} \bar{\zeta}^{k^2}
=1-i
\end{equation}
and
\begin{equation}
\chi_\rho(\psi^2) = 2 e^{\pi(n + \lambda) i} = 2.
\end{equation}
It follows that
\begin{align}
\dim \widetilde{\mathscr{H}}_{n,\lambda}^\psi &= \frac{1}{4}(\chi_\rho(\psi^0) + \chi_\rho(\psi^1) + \chi_\rho(\psi^2) + \chi_\rho(\psi^3)) \\
&=\frac{1}{4}(2ln + (1-i) + 2 + (1+i)) \\
&=\frac{ln}{2} + 1
\end{align}
The cases where $n+\lambda \equiv 1,2,3$ mod $4$ can be computed in the same way.

Assume that $2ln \equiv 2$ mod 4, which is equivalent to that  $n$ and $l$ is odd.
Since we get $\chi(\psi) = 0$ from \eqref{eqGauss},  it follows that
\begin{align}
\dim \widetilde{\mathscr{H}}_{n,\lambda}^\psi &= \frac{1}{4}\kak{2ln + 2e^{\pi i(n+\lambda)}} \\
&=
\begin{dcases}
	\frac{l n}{2}+\frac{1}{2} & \text{if}\ \abs{n}+\lambda \equiv 0,2 \mod 4, \\
	\frac{l n}{2}-\frac{1}{2} & \text{if}\ \abs{n}+\lambda \equiv 1,3 \mod 4.
	\end{dcases}
\end{align}
\end{proof}

Finally, we can also prove Weyl's law for the Heisenberg Bieberbach manifold $\Gamma'_{2l,\frac{\pi}{2}}\backslash\H$.

\begin{thm} \label{weyllawBpsi}
Let $N_{\Gamma'_{2l,\frac{\pi}{2}}\backslash\H}(t)$ be the number of positive eigenvalues of $\FS{\alpha}$ on $\Gamma'_{2l,\frac{\pi}{2}}\backslash\H$ which are less than or equal to $t > 0$. For $-1 \leq \alpha \leq 1$, we have
\begin{equation}
\lim_{t\to\infty}\frac{N_{\Gamma'_{2l,\frac{\pi}{2}}\backslash\H}(t)}{t^2}=A_\alpha\vol(\Gamma'_{2l,\frac{\pi}{2}}\backslash\H),
\end{equation}
where $A_\alpha$ is the same constant as in the Proposition \ref{weyllattice}.
\end{thm}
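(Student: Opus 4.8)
The plan is to mimic the proof of Theorem \ref{weyllawB}. Since $N'_{2l}$ is a normal subgroup of $\Gamma'_{2l,\frac{\pi}{2}}$ of index $4$ and $\psi$ acts freely and preserves the Lebesgue measure (recall that left translations and unitary automorphisms are measure preserving), the covering $N'_{2l}\backslash\H\to\Gamma'_{2l,\frac{\pi}{2}}\backslash\H$ is $4$-to-$1$, whence $\vol(\Gamma'_{2l,\frac{\pi}{2}}\backslash\H)=\frac14\vol(N'_{2l}\backslash\H)$. By Proposition \ref{weyllattice} applied to the lattice $N'_{2l}$, it therefore suffices to prove
\begin{equation}
\Bigl|N_{\Gamma'_{2l,\frac{\pi}{2}}\backslash\H}(t)-\tfrac14 N_{N'_{2l}\backslash\H}(t)\Bigr|=o(t^2)\quad\text{as}\ t\to\infty.
\end{equation}

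As in the previous section I would write $N_M(t)=N_M^{(a)}(t)+N_M^{(b)}(t)$, separating the eigenvalues of the form $\frac{\pi\abs{n}}{2}(2\lambda+1-\alpha\sgn n)$ from those of the form $\pi^2(\mu^2+\nu^2)$, for $M=N'_{2l}\backslash\H$ or $\Gamma'_{2l,\frac{\pi}{2}}\backslash\H$. The torus part satisfies $N_M^{(b)}(t)=o(t^2)$ by Weyl's law on $\R^2/\Lambda_{N'_{2l}}$, so only the $(a)$ part matters. Since $N'_{2l}\cong N_{2l}$, Proposition \ref{eigval} gives multiplicity $2l\abs{n}$ for each harmonic-oscillator eigenvalue on $N'_{2l}\backslash\H$, so $N_{N'_{2l}\backslash\H}^{(a)}(t)=\sideset{}{^{t}}{\sum}_{n,\lambda}2l\abs{n}$, the sum being over all $n\in\Z\setminus\set{0}$, $\lambda\in\Z_{\ge0}$ with $\frac{\pi\abs{n}}{2}(2\lambda+1-\alpha\sgn n)\le t$. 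On the other hand, Theorem \ref{dim2ln} yields $\dim\widetilde{\mathscr{H}}_{n,\lambda}^\psi=\frac{l\abs{n}}{2}+\varepsilon(n,\lambda)$, where $\varepsilon(n,\lambda)\in\set{-1,-\tfrac12,0,\tfrac12,1}$ depends only on the parities of $\abs{n}$ and $l$ and on $\abs{n}+\lambda\bmod4$. Hence
\begin{equation}
N_{\Gamma'_{2l,\frac{\pi}{2}}\backslash\H}^{(a)}(t)-\tfrac14 N_{N'_{2l}\backslash\H}^{(a)}(t)=\sideset{}{^{t}}{\sum}_{n,\lambda}\varepsilon(n,\lambda),
\end{equation}
and the task is to bound this correction sum.

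The decisive point is that, for each fixed $n$, the sequence $\lambda\mapsto\varepsilon(n,\lambda)$ is periodic of period $4$ with mean zero: in case (i) of Theorem \ref{dim2ln} the values over one period are $(1,0,0,-1)$, and in case (ii) they are $(\tfrac12,-\tfrac12,\tfrac12,-\tfrac12)$. Therefore every partial sum $\sum_{\lambda=0}^{\Lambda}\varepsilon(n,\lambda)$ is bounded in absolute value by a universal constant $C$ (the full periods cancel and at most three residual terms remain). Summing in $\lambda$ first, we obtain
\begin{equation}
\Bigl|\sideset{}{^{t}}{\sum}_{n,\lambda}\varepsilon(n,\lambda)\Bigr|\le C\,\#\Set{n\in\Z\setminus\set{0} | \tfrac{\pi\abs{n}}{2}(2\lambda+1-\alpha\sgn n)\le t\ \text{for some}\ \lambda}.
\end{equation}
Finally, for $-1\le\alpha\le1$ the smallest positive eigenvalue attached to a given $n$ is of order $\abs{n}$, so every admissible $n$ satisfies $\abs{n}\le C't$; thus the counting set has $O(t)$ elements and the correction sum is $O(t)=o(t^2)$, which finishes the proof.

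The step I expect to be the main obstacle is the verification, uniformly across the two parity regimes (i) and (ii) of Theorem \ref{dim2ln} and across the sign of $n$, that the $\lambda$-wise corrections $\varepsilon(n,\lambda)$ cancel in mean over each period of length $4$ and that the boundary contribution is $O(1)$ per $n$. Some additional care is needed in the degenerate cases $\alpha=\pm1$, where $2\lambda+1-\alpha\sgn n$ vanishes for $\lambda=0$ and one sign of $n$, to confirm that the set of admissible $n$ remains only $O(t)$.
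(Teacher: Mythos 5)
Your proposal is correct, but the decisive step is genuinely different from the paper's. The paper reduces, as you do, to comparing the harmonic-oscillator parts, but then bounds the discrepancy crudely: it writes $\frac{l\abs{n}}{2}-1\leq\dim\widetilde{\mathscr{H}}_{n,\lambda}^\psi\leq\frac{l\abs{n}}{2}+1$, so the error is controlled by the \emph{number} of admissible pairs $\sideset{}{^{t}}{\sum}_{n,\lambda}1$, and the bulk of the paper's proof consists of integral-comparison estimates of harmonic sums showing that this count has order $t\log t$ while the main term $\sideset{}{^{t}}{\sum}_{n,\lambda}2l\abs{n}$ has order $t^{2}$ (with a separate discussion for $\alpha=\pm1$, where the zero eigenvalues at $\lambda=0$ are discarded). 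You instead exploit cancellation: the correction $\varepsilon(n,\lambda)=\dim\widetilde{\mathscr{H}}_{n,\lambda}^\psi-\frac{l\abs{n}}{2}$ is, for fixed $n$, periodic in $\lambda$ of period $4$ with mean zero in both parity regimes of Theorem \ref{dim2ln} (values $1,0,0,-1$ or $\frac12,-\frac12,\frac12,-\frac12$ over a period), so each inner $\lambda$-sum is $O(1)$, and summing over the $O(t)$ admissible $n$ gives an $O(t)$ total error --- sharper than the paper's $O(t\log t)$ and requiring no harmonic-sum estimates at all. Amusingly, your argument is the exact analogue of the paper's own proof of Theorem \ref{weyllawB} for $\Gamma_{2l,\pi}$ (where the alternation is run in $n$ for fixed $\lambda$), whereas the paper abandons that cancellation device for $\Gamma'_{2l,\frac{\pi}{2}}$. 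Your flagged worries are benign: in the cases $\alpha=\pm1$ the pairs with zero eigenvalue are simply excluded, a truncated partial sum of a mean-zero periodic sequence is still $O(1)$, and the admissible $n$ still satisfy $\abs{n}\leq C't$ since the smallest positive eigenvalue for a given $n$ is at least $\pi\abs{n}\min\tkak{\frac{1-\abs{\alpha}}{2},1}$; also note that your formulation only needs Proposition \ref{weyllattice} for the order of $N_{N'_{2l}\backslash\H}(t)$, not any independent asymptotics of the partial sums.
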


\begin{proof}
Let $N_M^{(a)} (t)$ be the number of positive eigenvalues $\FS{\alpha}$ on $M$ which are of the  form $\frac{\pi\abs{n}}{2}(2\lambda +1 - \alpha \sgn n)$ and less than or equal to $t$
for $M = N'_{2l}\backslash\H$ or $\Gamma'_{2l,\frac{\pi}{2}}\backslash\H$.
By the same argument as in the proof of Theorem \ref{weyllawB}, it suffices to show that
\begin{equation}\label{eq1}
\frac{N^{(a)}_{\Gamma'_{2l,\frac{\pi}{2}}\backslash\H}(t)}{N^{(a)}_{N'_{2l}\backslash\H}(t)} \rightarrow \frac{1}{4},\quad t\rightarrow \infty.
\end{equation}
Using Theorem \ref{dim2ln}, we have
\begin{equation}
	\sideset{}{^t}{\sum}_{n,\lambda}
		\kak{\frac{l \abs{n}}{2} - 1} 
	\leq N^{(a)}_{\Gamma'_{2l,\frac{\pi}{2}}\backslash\H}(t) 
	\leq \sideset{}{^{t}}{\sum}_{n,\lambda} 
	\kak{\frac{l \abs{n}}{2} + 1},
\end{equation}
where the sum $\sideset{}{^{t}}{\sum}_{n,\lambda}$ is taken over all $n \in \Z \setminus \set{0}$ and $\lambda \in \Z_{\geq 0}$ such that
\begin{equation}
\frac{\pi \abs{n}}{2}(2\lambda +1- \alpha \sgn n) \leq t.
\end{equation}
Since $N^{(a)}_{N'_{2l}\backslash\H}(t) = \sideset{}{^{t}}{\sum}_{n,\lambda} 2l\abs{n}$, it follows that
\begin{equation}
\frac{1}{4} - \frac{\sideset{}{^{t}}{\sum}_{n,\lambda} 1}{\sideset{}{^{t}}{\sum}_{n,\lambda} 2l\abs{n}}  
\leq \frac{N^{(a)}_{\Gamma'_{2l,\frac{\pi}{2}}\backslash\H}(t)}{N^{(a)}_{N'_{2l}\backslash\H}(t)} 
\leq \frac{1}{4} + \frac{\sideset{}{^{t}}{\sum}_{n,\lambda} 1}{\sideset{}{^{t}}{\sum}_{n,\lambda} 2l\abs{n}}.
\end{equation}
Therefore, it is sufficient to show that
	\begin{equation}\label{limit}
		\frac{\sideset{}{^{t}}{\sum}_{n,\lambda} 1}{\sideset{}{^{t}}{\sum}_{n,\lambda} 2l\abs{n}} 
		\rightarrow 0,\quad t\rightarrow \infty.
	\end{equation}
For this, it is enough to check that the orders of $\sideset{}{^{t}}{\sum}_{n,\lambda} 1$ and $\sideset{}{^{t}}{\sum}_{n,\lambda} 2l\abs{n}$ with respect to $t$ are $t \log t$ and $t^{2}$, respectively.
By the symmetry of $\sgn n$, we may assume that $0 \leq \alpha \leq 1$ without loss of generality. 
We consider the cases $0 \leq \alpha < 1$ and $\alpha = 1$ separately.

We first assume that $0 \leq \alpha <1$. We have
	\begin{equation}
		\sideset{}{^{t}}{\sum}_{n,\lambda} 1 
		= \sum_{\lambda=0}^{\left\lfloor \frac{2}{\pi}t \right\rfloor}\left\lfloor 
			\frac{2}{\pi} \frac{t}{2\lambda + 1 - \alpha} 
		\right\rfloor 
		+ \sum_{\lambda=0}^{\left\lfloor \frac{2}{\pi}t \right\rfloor}\left\lfloor 
			\frac{2}{\pi} \frac{t}{2\lambda + 1 + \alpha} 
		\right\rfloor
	\end{equation}
and
	\begin{align}
		& \sideset{}{^{t}}{\sum}_{n,\lambda} 2l\abs{n} \\
		 & = 2l\left(
			\sum_{\lambda = 0}^{\left\lfloor \frac{2}{\pi}t \right\rfloor}
			\sum_{n = 1}^{\left\lfloor\frac{2}{\pi}\frac{t}{2\lambda + 1 - \alpha}\right\rfloor}n
			+ \sum_{\lambda = 0}^{\left\lfloor \frac{2}{\pi}t \right\rfloor}
			\sum_{n = 1}^{\left\lfloor\frac{2}{\pi}\frac{t}{2\lambda + 1 + \alpha}\right\rfloor}n
		\right) \\
		& = 2l\left(
			\sum_{\lambda = 0}^{\left\lfloor\frac{2}{\pi}t\right\rfloor}
			\frac{1}{2}\left\lfloor \frac{2}{\pi}\frac{t}{2\lambda + 1 - \alpha} \right\rfloor \left(
					\left\lfloor \frac{2}{\pi}\frac{t}{2\lambda + 1 - \alpha} \right\rfloor + 1
				\right)
			+ \sum_{\lambda = 0}^{\left\lfloor\frac{2}{\pi}t\right\rfloor}
			\frac{1}{2}\left\lfloor \frac{2}{\pi}\frac{t}{2\lambda + 1 + \alpha} \right\rfloor \left(
					\left\lfloor \frac{2}{\pi}\frac{t}{2\lambda + 1 + \alpha} \right\rfloor + 1
				\right)
		\right) \\
		& = l\left(
			\sum_{\lambda = 0}^{\left\lfloor\frac{2}{\pi}t\right\rfloor}
			\left\lfloor \frac{2}{\pi}\frac{t}{2\lambda + 1 - \alpha} \right\rfloor \left(
					\left\lfloor \frac{2}{\pi}\frac{t}{2\lambda + 1 - \alpha} \right\rfloor + 1
				\right)
			+ \sum_{\lambda = 0}^{\left\lfloor\frac{2}{\pi}t\right\rfloor}
			\left\lfloor \frac{2}{\pi}\frac{t}{2\lambda + 1 + \alpha} \right\rfloor \left(
					\left\lfloor \frac{2}{\pi}\frac{t}{2\lambda + 1 + \alpha} \right\rfloor + 1
				\right)
		\right).
	\end{align}
Setting 
	\begin{equation}
		F_{\pm}(t) 
		= \sum_{\lambda=0}^{\left\lfloor \frac{2}{\pi}t \right\rfloor}
		\left\lfloor \frac{2}{\pi} \frac{t}{2\lambda + 1 \mp \alpha}\right\rfloor,
		\quad
		G_{\pm}(t)
		= \sum_{\lambda = 0}^{\left\lfloor\frac{2}{\pi}t\right\rfloor}
		\left\lfloor \frac{2}{\pi}\frac{t}{2\lambda + 1 \mp \alpha} \right\rfloor \left(
			\left\lfloor \frac{2}{\pi}\frac{t}{2\lambda + 1 \mp \alpha} \right\rfloor + 1
		\right),
	\end{equation}
it follows that
	\begin{equation}
		\sum_{\lambda = 0}^{\lfloor ct \rfloor}\left(
			\frac{ct}{2\lambda + 1 \mp \alpha} - 1
		\right)
		\leq F_{\pm}(t)
		\leq\sum_{\lambda = 0}^{\lfloor ct \rfloor}
		\frac{ct}{2\lambda + 1 \mp \alpha}
	\end{equation}
and
	\begin{equation}
		\sum_{\lambda = 0}^{\lfloor ct \rfloor}\left(
			\frac{ct}{2\lambda + 1 \mp \alpha} - 1
		\right)\frac{ct}{2\lambda + 1 \mp \alpha}
		\leq G_{\pm}(t)
		\leq\sum_{\lambda = 0}^{\lfloor ct \rfloor}
		\frac{ct}{2\lambda + 1 \mp \alpha}\left(
			\frac{ct}{2\lambda + 1 \mp \alpha} + 1
		\right),
	\end{equation}
where $c = \frac{2}{\pi}$. If we set $H_{\pm}(t) = \sum_{\lambda = 0}^{\lfloor ct \rfloor}\frac{ct}{2\lambda + 1 \mp \alpha}$, the two inequalities are rewritten as
	\begin{equation}\label{sumF}
		H_{\pm}(t) - \lfloor ct \rfloor - 1
		\leq F_{\pm}(t)
		\leq H_{\pm}(t)
	\end{equation}
and
	\begin{equation}\label{sumG}
		\sum_{\lambda = 0}^{\lfloor ct \rfloor}\left(
			\frac{ct}{2\lambda + 1 \mp \alpha}
		\right)^{2} - H_{\pm}(t)
		\leq G_{\pm}(t)
		\leq\sum_{\lambda = 0}^{\lfloor ct \rfloor}\left(
			\frac{ct}{2\lambda + 1 \mp \alpha}
		\right)^{2} + H_{\pm}(t).
	\end{equation}

By a direct calculation, we get
	\begin{align}
		H_{\pm}(t) 
		= \sum_{\lambda = 0}^{\lfloor ct \rfloor}\frac{ct}{2\lambda + 1 \mp \alpha}
		& \leq ct\left(
			\frac{1}{1 \mp \alpha} 
			+ \int_{1}^{ct}\frac{1}{2x - 1 \mp \alpha}\,dx
		\right) \\
		& = ct\left(
		\frac{1}{2}\log(2ct - 1 \mp \alpha) + \frac{1}{1 \mp \alpha} - \frac{1}{2}\log(1 \mp \alpha)
		\right)
	\end{align}
and
	\begin{align}
		H_{\pm}(t)
		= \sum_{\lambda = 0}^{\lfloor ct \rfloor}
			\frac{ct}{2\lambda + 1 \mp \alpha} 
		& \geq ct\int_{0}^{ct - 1}
			\frac{1}{2x + 1 \mp \alpha} 
		\,dx \\
		& = ct\left(
		\frac{1}{2}\log(2ct - 1 \mp \alpha) - \frac{1}{2}\log(1 \mp \alpha)
		\right).
	\end{align}
Hence the orders of $F_{\pm}(t)$ are $t \log t$ by \eqref{sumF}, and so the order of the sum $\sideset{}{^{t}}{\sum}_{n,\lambda}1$ is also $t \log t$. On the other hand, we also have
	\begin{align}
		\sum_{\lambda = 0}^{\lfloor ct \rfloor}\left(
			\frac{ct}{2\lambda + 1 \mp \alpha}
		\right)^{2}
		& \leq c^{2}t^{2}\left(
			\frac{1}{(1 \mp \alpha)^{2}}
			+ \int_{1}^{ct}
				\frac{1}{(2x - 1 \mp \alpha)^{2}}\,dx
		\right) \\
		& = c^{2}t^{2}\left(
			\frac{1}{(1 \mp \alpha)^{2}}
			+ \frac{1}{2(1 \mp \alpha)}
			- \frac{1}{2(2ct - 1 \mp \alpha)}
		\right)
	\end{align}
and
	\begin{align}
		\sum_{\lambda = 0}^{\lfloor ct \rfloor}\left(
			\frac{ct}{2\lambda + 1 \mp \alpha}
		\right)^{2} 
		& \geq c^{2}t^{2}
		\int_{0}^{ct - 1}\frac{1}{(2x + 1 \mp \alpha)^{2}}\,dx \\
		& = c^{2}t^{2}\left(
			\frac{1}{2(1 \mp \alpha)}
			- \frac{1}{2(2ct - 1 \mp \alpha)}
		\right).
	\end{align}
Since the orders of $H_{\pm}(t)$ are $t \log t$, it follows from \eqref{sumG} and the above estimates of $\sum_{\lambda = 0}^{\lfloor ct \rfloor}\left(\frac{ct}{2\lambda + 1 \mp \alpha}\right)^{2} $ that the order of $G_{\pm}(t)$ is $t^{2}$, and hence the order of $\sideset{}{^{t}}{\sum}_{n,\lambda} 2l\abs{n}$ is also $t^{2}$.
	
Suppose that $\alpha = 1$. We remark that if $n > 0$ and $\lambda = 0$, the corresponding eigenvalues are 0. Since the kernel of $\FS{1}$ is infinite dimensional, we omit the case. Then we have
	\begin{equation}
		\sideset{}{^{t}}{\sum}_{n,\lambda} 1
		= \sum_{\lambda = 1}^{\lfloor ct \rfloor} \left\lfloor
			\frac{ct}{2\lambda}
		\right\rfloor
		+ \sum_{\lambda = 0}^{\lfloor ct \rfloor} \left\lfloor
			\frac{ct}{2(\lambda + 1)}
		\right\rfloor
	\end{equation}
and
	\begin{align}
		\sideset{}{^{t}}{\sum}_{n,\lambda} 2l\abs{n}
		 & = 2l\left(
			\sum_{\lambda = 1}^{\lfloor ct \rfloor}
			\sum_{n = 1}^{\left\lfloor \frac{ct}{2\lambda}\right\rfloor}n
			+ \sum_{\lambda = 0}^{\lfloor ct \rfloor}
			\sum_{n = 1}^{\left\lfloor \frac{ct}{2(\lambda + 1)}\right\rfloor}n
		\right) \\
		& = l\left(
			\sum_{\lambda = 1}^{\lfloor ct\rfloor}
			\left\lfloor \frac{ct}{2\lambda} \right\rfloor \left(
					\left\lfloor \frac{ct}{2\lambda} \right\rfloor + 1
				\right)
			+ \sum_{\lambda = 0}^{\left\lfloor ct\right\rfloor}
			\left\lfloor \frac{ct}{2(\lambda + 1)} \right\rfloor \left(
					\left\lfloor \frac{ct}{2(\lambda + 1)} \right\rfloor + 1
				\right)
		\right).
	\end{align}
These sums can be estimated in the same manner as in the previous case, and so the orders of $\sideset{}{^t}{\sum}_{n,\lambda} 1$ and $\sideset{}{^{t}}{\sum}_{n,\lambda} 2l\abs{n}$ are $t \log t$ and $t^{2}$, respectively.
This completes the proof.
\end{proof}

\section*{Acknowledgement}
I would like to express my sincere gratitude to Professor Yoshihiko~Matsumoto for suggesting the problem and a lot of helpful advice. I am deeply grateful to Professor Yuya~Takeuchi for valuable advice regarding the proof of Theorem \ref{dim2ln}.


\begin{thebibliography}{99}
\bibitem{auslander} L. Auslander, Bieberbach's theorems on space groups and discrete uniform subgroups of Lie groups, \textit{Ann. of Math. (2)} \textbf{71} (1960), 579--590.
\bibitem{brezin} J. Brezin, Harmonic analysis on nilmanifolds, \textit{Trans. Amer. Math. Soc.} \textbf{150} (1970), 611--618.
\bibitem{dekimpe} K. Dekimpe, \textit{Almost-Bieberbach groups: affine and polynomial structures}, Lecture Notes in Math., Vol. 1639, Springer-Verlag, Berlin, 1996.
\bibitem{dekimpe2} K. Dekimpe, A users' guide to infra-nilmanifolds and almost-Bieberbach groups,
Handbook of group actions, Vol. III, 215--262, Adv. Lect. Math. (ALM), 40, International Press, Somerville, MA, 2018
\bibitem{fan} C. Fan, E. Kim and Y.E. Zeytuncu, A Tauberian approach to an analog of Weyl's law for the Kohn Laplacian on compact Heisenberg manifolds, \textit{Complex Anal. Synerg.} \textbf{8} (2022), no.1, Paper No. 4, 7 pp.
\bibitem{folland} G.B. Folland, Compact Heisenberg manifolds as CR manifolds, \textit{J. Geom. Anal.} \textbf{14}
(2004), no.3, 521--532.
\bibitem{follandharm} G.B. Folland, \textit{Harmonic analysis in phase space}, Ann. of Math. Stud., Vol. 122,
Princeton University Press, Princeton, NJ, 1989.
\bibitem{follandstein} G.B. Folland and E.M. Stein, Estimates for the $\bar{\partial}_b$ complex and analysis on the Heisenberg group, \textit{Comm. Pure Appl. Math.} \textbf{27} (1974), 429--522.
\bibitem{strichartz} R. Strichartz, Spectral asymptotics on compact Heisenberg manifolds, \textit{J. Geom. Anal.} \textbf{26} (2016), no.3, 2450--2458.
\bibitem{taylor} M.E. Taylor, \textit{Noncommutative harmonic analysis}, Math. Surveys Monogr., Vol. 22,
American Mathematical Society, Providence, RI, 1986.
\bibitem{tolimieri} R. Tolimieri, Heisenberg manifolds and theta functions, \textit{Trans. Amer. Math. Soc.} \textbf{239} (1978), 293--319.
\end{thebibliography}
\end{document}